\newtheorem{theorem}{Theorem}[section]
\newtheorem{lemma}{Lemma}[section]
\newcommand{\Z}{\mathbb{Z}}
\newcommand{\R}{\mathbb{R}}
\newcommand{\C}{\mathbb{C}}
\newcommand{\grad}{\nabla}
\newcommand{\brac}[1]{\left\{#1\right\}}
\begin{document}
%\lhead{}
%\rhead{}

\begin{flushleft}
\Large 
%\noindent{\bf \Large An inverse shape and transmission eigenvalue problem with a second-order boundary condition}\\

\noindent{\bf \Large Direct and inverse scattering for an isotropic medium with a second-order boundary condition}

\end{flushleft}

\vspace{0.2in}

{\bf  \large Govanni Granados}\\
\indent {\small Department of Mathematics, University of North Carolina at Chapel Hill,\\ \indent Chapel Hill, NC 27599, United States of America }\\
\indent {\small Email: \texttt{ggranad@unc.edu}}\\

{\bf  \large Isaac Harris}\\
\indent {\small Department of Mathematics, Purdue University, West Lafayette, IN 47907, United States of \\ \indent America }\\
\indent {\small Email: \texttt{harri814@purdue.edu} }\\

{\bf  \large Andreas Kleefeld}\\
\indent {\small Forschungszentrum J\"{u}lich GmbH, J\"{u}lich Supercomputing Centre, } \\
\indent {\small Wilhelm-Johnen-Stra{\ss}e, 52425 J\"{u}lich, Germany}\\
\indent {\small University of Applied Sciences Aachen, Faculty of Medical Engineering and } \\
\indent {\small Technomathematics, Heinrich-Mu\ss{}mann-Str. 1, 52428 J\"{u}lich, Germany}\\
\indent {\small Email: \texttt{a.kleefeld@fz-juelich.de}}\\

%\vspace{0.2in}

%%%%%%%%%%%%%%%%%%%%%%%%%%%%%%%%%%%%%%%%
\begin{abstract}
\noindent 
We consider the direct and inverse scattering problem for a penetrable, isotropic obstacle with a second-order Robin boundary condition, which asymptotically models the delamination of the boundary of the scatterer. We develop a direct sampling method to solve the inverse shape problem by numerically recovering the scatterer. Here, we assume that the corresponding Cauchy data is measured on the boundary of a region that fully contains the scatterer. Similar methods have been applied to other inverse shape problems, but they have not been studied for a penetrable, isotropic scatterer with a second-order Robin boundary condition. We also initiate the study of the corresponding transmission eigenvalue problem, which is derived from assuming zero Cauchy data is measured on the boundary of the region that fully contains the scatterer. We prove that the transmission eigenvalues for this problem are at most a discrete set. Numerical examples will be presented for the inverse shape problem in two dimensions for circular and non-circular scatterers. Further, transmission eigenvalues are computed numerically for various scatterers.
\noindent 
\end{abstract}

\noindent {\bf Keywords}:  Inverse Scattering $\cdot$ Shape Reconstruction $\cdot$ Transmission Eigenvalues\\

\noindent {\bf MSC}: 35J25, 35P25

%%%%%%%%%%%%%%%%%%%%%%%%%%%%%%%%%%%%%%%%%%%%%%%%%%%%%%%%%%%
\section{Introduction}
Inverse scattering has played a major role within a series of imaging modalities as it arises in many practical applications such as medical imaging \cite{tamori1}, nondestructive testing \cite{deng1, kharkovsky1}, and geophysical exploration \cite{sato1}. In this paper, we are interested in the inverse scattering problem for an isotropic material with a delaminated boundary. This can be seen as a coating on the boundary than models a thin layer made out of a different material from the  scatterer. This thin layer will be asymptotically modeled as an interface where a Generalized Impedance Boundary Condition is given. This class of boundary conditions are well known for modeling imperfectly conducting and impenetrable obstacles as well as thin coatings in inverse scattering \cite{bendali, durufle, haddar1, haddar2}.  Previously, \cite{cakoni1} models the delamination of a thin region on a portion of the boundary of a region within the scatterer. In this paper, the delamination occurs on the entire boundary of the scatterer. It will be modeled by a second-order (Robin) boundary condition characterized by the jump of the trace of the normal derivative of the scattered field across the entire boundary of a penetrable obstacle. Recently, this boundary condition has also been studied in a shape identification problem coming from electrostatics \cite{EIT-granados2}.

To solve this inverse scattering problem with fixed frequency, we will employ a qualitative method. One of their main advantages is that they generally require little a priori knowledge about the scatterer, which makes them advantageous for nondestructive testing. In this paper, we will extend the applicability of the well known Direct Sampling Method by developing a stable reconstruction algorithm given Cauchy data on a measurement boundary. This particular qualitative method has been studied for inverse scattering for an impenetrable obstacle \cite{griesmaier} and for an isotropic scatterer \cite{Ito2012,Ito2013,LiZou}. Indeed, this method works for other imaging modalities like as seen in \cite{ChowHanZou,ChowItoLiuZou}. Recently, it has been utilized for a scattering problem modeled by the biharmonic wave equation \cite{harris-lee-li}. On a broader scope, different classes of qualitative methods, such as Linear Sampling \cite{arens,GLSM, EM-cakoni, cheney1, garnier}, Factorization Method \cite{bondarenko, harris4, kirsch}, and the Multiple Signal Classification algorithm \cite{MUSIC-ammari-scattering, MUSIC-jake,MUSIC-EM1} have also been studied for various types of inverse scattering problems. 

In the case of the Linear Sampling and Factorization Methods, it is well-known that these methods are not valid at the so-called transmission eigenvalues \cite{cakoni-colton,analytic-fred}. These values can be seen as corresponding to specific wave numbers where the scattered field is zero outside the scatterer. Therefore, one would not be able to detect the scatterer if there were such an incident field. The transmission eigenvalues can, in some sense, be seen as a non-scattering frequency when such an incident field exists. In general, the transmission eigenvalue problems derived from inverse scattering are non-self-adjoint and nonlinear. This means studying them is mathematically challenging but also interesting since they often depend monotonically on the material parameters \cite{Hughes}. Here we begin the study of the transmission eigenvalue problem corresponding to our scattering problem, which we derive from assuming zero Cauchy data of the scattered field on the measurement boundary. This problem is more complicated to handle than other transmission eigenvalue problems due to the second-order boundary condition. 

The rest of the paper is organized as follows. In Section \ref{dp-ip}, we introduce in full detail the direct scattering problem for an isotropic medium with a second-order boundary condition. We will use a variational method to prove well-posedness for the direct problem and derive the appropriate functional settings. In Section \ref{inv-scatter}, we derive and analyze an imaging functional via direct sampling. This method requires using incident plane waves from a finite number of incident directions. The induced, measured Cauchy data of the scattered field is used to derive a stable imaging functional with respect to noise. In Section \ref{numerical-section}, numerical examples are presented in $\mathbb{R}^2$ to validate the analysis of the imaging functional. In Section \ref{discrete-TEV} we consider the associated Transmission Eigenvalue problem. By appealing to the analytic Fredholm Theorem \cite{analytic-fred}, we show that the transmission eigenvalues form an at most discrete set in the complex plane for a specific case of the refractive index. Then, in Section \ref{numerical-TEV}, we present numerical examples for different scatterers. Conclusion and comments on future work are given in Section \ref{conclusion}.	 

%%%%%%%%%%%%%%%%%%%%%%%%%%%%%%%%%%%%%%%%%%%%%%%%%%%%%%%%%%%
\section{The Direct Scattering Problem}\label{dp-ip}
We begin by considering the direct problem associated with the inverse scattering and the transmission eigenvalue problem with a generalized Robin transmission condition on its boundary. Let $D \subset \mathbb{R}^{d}$ for $d = 2$ or $3$ be a simply connected open set with Lipschitz boundary $\partial D$ where $\nu$ is the unit outward normal vector. We assume that the refractive index $n \in L^{\infty}(D)$ is complex valued such that
\begin{equation}\label{refraction-index}
    \text{Im}(n) \geq 0,
\end{equation}
where the support of $n-1$ is $D$, i.e. $n =1$ in $\mathbb{R}^d \backslash \overline{D}$. We will focus on the case where the Robin boundary condition on $\partial D$ asymptotically models delamination between two materials. The total field is denoted by $u (x) = u^i (x) + u^s (x)$, with $u^s$ denoting the scattered field created and $u^{i} (x,\hat{y}) = \text{e}^{\text{i}kx \cdot \hat{y}}$ with $|\hat{y}|=1$ denotes the incident plane wave that illuminates the scatterer. The direct scattering problem for an isotropic scatterer with delamination is given by: find $u^s$ which satisfies
\begin{equation} \label{bvp}
\Delta u^s + k^2 n u^s = - k^2 (n-1) u^i \enspace \text{in} \enspace \mathbb{R}^{d} \backslash \partial D \quad \text{with} \quad [\![ u ]\!] \big \rvert_{\partial D} = 0
\end{equation}
as well as the Robin boundary condition
\begin{equation} \label{2nd-bc}
[\![\partial_\nu u^s ]\!] \big \rvert_{\partial D} = \mathscr{B}(u) \quad \text{where} \quad  [\![\partial_\nu u^s ]\!] \big|_{\partial D} := ( \partial_{\nu} u_{+} - \partial_{\nu} u_{-}) \big|_{\partial D}.
\end{equation}
The `+' notation represents the trace taken from $B_R \setminus \overline{D}$ and the `$-$' notation represents the trace taken from $D$.
Here, $ \mathscr{B}( \cdot )$ is a so--called Laplace--Beltrami boundary operator that is defined as 
$$\mathscr{B}(u) := -\nabla_{\partial D} \cdot \mu \nabla_{\partial D} u + \gamma u .$$
In the $\mathbb{R}^2$ case, the operator $ \nabla_{\partial D} \cdot \mu \nabla_{\partial D}$ is replaced by the operator $\frac{\text{d}}{\text{d}s} \mu \frac{\text{d}}{\text{d}s}$ where ${\text{d}} / \text{d}s$ is the tangential derivative and $s$ is the arc-length. This second-order Robin condition in \eqref{2nd-bc} models the delamination of the scatterer $D$ on its boundary $\partial D$ and states that the jump of the normal derivative of the scattered field $\partial_{\nu} u^s$ across this boundary is quasi-proportional to the total field $u$. The scattered field satisfies the radiation condition 
$$\partial_{r} u^s - \text{i}ku^s = \mathcal{O} \bigg( \dfrac{1}{r^{(d+1)/2}} \bigg) \,\, \text{as} \,\, r \rightarrow \infty \quad \text{uniformly with respect to $\hat{x} = \frac{x}{|x|}$ where $r = |x|$.}$$ 

As in other works, we consider \eqref{bvp} on the truncated domain $B_R := \{ x \in \mathbb{R}^d \, : \, |x| < R\}$ for a fixed $R>0$ such that $D \subset B_R$ and dist$(\partial B_R , \overline{D}) > 0$. Due to the second-order Robin condition \eqref{2nd-bc}, the scattered field $u^s$ that satisfies \eqref{bvp} requires half an order of additional regularity on the boundary $\partial D$. Thus, the appropriate solution space is the Hilbert space defined as
$$ \widetilde{H}^{1}(B_R) := \brac{ \varphi \in H^{1}(B_R) \quad \text{such that} \quad \varphi \big \rvert_{\partial D} \in H^{1}(\partial D) }.$$ 
This will be equipped with the norm
$$\norm{\varphi}^{2}_{\widetilde{H}^{1}(B_R)} = \norm{\varphi}^{2}_{H^{1}(B_R)} + \norm{\varphi}^{2}_{H^{1}(\partial D)}. $$
Since $u^s \in \widetilde{H}^{1}(B_R) $, it is known that $[\![ u ]\!] \big \rvert_{\partial D} = 0$. For analytical purposes of well-posedness of the direct problem and the analysis of the inverse and transmission eigenvalue problem in Section \ref{inv-scatter} and Section \ref{discrete-TEV}, we make the following sign assumptions on the boundary parameters $\gamma \in L^{\infty} (\partial D)$ and $\mu \in L^{\infty} \big(\partial D , \C^{(d-1)\times (d-1)}\big)$. Namely, we assume that there exists positive constant $\gamma_{\text{min}} > 0$ such that the real- and imaginary-part of the coefficient $\gamma$ satisfies
\begin{equation}\label{gamma-bounds}
\text{Re}(\gamma) \geq \gamma_{\text{min}} > 0 \quad \text{and} \quad - \text{Im}(\gamma) \geq 0
\end{equation}
for almost every $x \in \partial D$. We also assume that the real- and imaginary-parts of $\mu$ are Hermetian definite matrices where there exists positive constant $\mu_{\text{min}} > 0$ such that
\begin{equation}\label{mu-bounds}
 \overline{\xi}\cdot \text{Re}(\mu ) \xi \geq \mu_{\text{min}}  |\xi|^2 \quad \text{and} \quad - \overline{\xi} \cdot \text{Im}(\mu )  \xi \geq 0
\end{equation}
for all $\xi \in \mathbb{C}^{d-1}$ for almost every $x \in \partial D$. We now consider Green's 1st Theorem on the region $B_R \backslash \overline{D}$ 
$$\int_{B_R \setminus \overline{D}} \nabla u^s \cdot \nabla \overline{\varphi} -k^2 n u^s \overline{\varphi} \, \text{d}x - \int_{\partial B_R} \overline{\varphi} \partial_{\nu} u^s \, \text{d}s + \int_{\partial D} \overline{\varphi} \partial_{\nu} u^{s}_{+} \, \text{d}s  = 0, $$
as well as Green's 1st Theorem on the region $D$
$$ \int_{D} \nabla u^s \cdot \nabla \overline{\varphi} -k^2 n u^s \overline{\varphi}  \, \text{d}x  - \int_{\partial D} \overline{\varphi} \partial_{\nu} u^{s}_{-} \, \text{d}s  = \int_{D} \overline{\varphi} k^2 (n-1) u^i \, \text{d}x ,$$ 
for any test function $\varphi \in \widetilde{H}^1 (B_R)$. The variational formulation for \eqref{bvp} is given by  adding these two equations

\begin{align}\label{vf}
\int_{B_R} \nabla u^s \cdot \nabla \overline{\varphi} \, \text{d}x &-k^2 n u^s \overline{\varphi} \, \text{d}x + \int_{\partial D}  \mu \nabla_{\partial D} u^s \cdot \nabla_{\partial D} \overline{\varphi}  +\gamma  u^s \overline{\varphi} \, \text{d}s - \int_{\partial B_R} \overline{\varphi} \partial_{\nu} u^s \, \text{d}s \nonumber \\
&  = \int_{D} \overline{\varphi} k^2 (n-1) u^i \, \text{d}x - \int_{\partial D}  \mu \nabla_{\partial D} u^i \cdot \nabla_{\partial D} \overline{\varphi} - \gamma \overline{\varphi} u^i \, \text{d}s
\end{align}
where we have used the second-order Robin transmission condition  \eqref{2nd-bc} on $\partial D$. 

We prove the well-posedness of \eqref{bvp}--\eqref{2nd-bc} by using a variational technique, namely, the Fredholm Alternative. We write the variational formulation \eqref{vf} as an equivalent problem by introducing the Dirichlet-to-Neumann
operator (DtN) $\Lambda : H^{1/2}(\partial B_R) \rightarrow H^{-1/2}(\partial B_R)$ defined as 
\begin{equation}\label{dtn}
\Lambda f = \partial_{\nu} w \big \rvert_{\partial B_R} \quad \text{such that } \quad \text{$\Delta w + k^2 w = 0$ and  $w = f$ on $\partial B_R$} 
\end{equation}
where $w$ satisfies the radiation condition. We will need the following important property of the DtN operator as shown in Theorem 5.22 in \cite{cakoni-colton}.

\begin{lemma}\label{dtn_0}
The DtN map $\Lambda$ is a bounded linear operator from $H^{1/2}(\partial B_R)$ to $H^{-1/2}(\partial B_R)$. Furthermore, there exists a bounded operator $\Lambda_0: H^{1/2}(\partial B_R) \rightarrow H^{-1/2}(\partial B_R)$ satisfying
$$- \int_{\partial B_R}  \overline{w}  \Lambda_0 w \, \text{d}s \geq C \norm{w}^{2}_{H^{1/2}(\partial B_R)} $$ for some constant $C>0$, such that $\Lambda - \Lambda_0 :  H^{1/2}(\partial B_R) \rightarrow H^{-1/2}(\partial B_R) $ is compact.
\end{lemma}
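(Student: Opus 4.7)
My plan is to diagonalize $\Lambda$ by separation of variables on the ball $B_R$. Expand any $f \in H^{1/2}(\partial B_R)$ in a basis $\brac{Y_\ell}$ of spherical harmonics (in $\mathbb{R}^3$) or Fourier modes $\text{e}^{\text{i}\ell \theta}$ (in $\mathbb{R}^2$) as $f = \sum_\ell f_\ell Y_\ell$. The unique outgoing radiating solution of \eqref{dtn} then admits the explicit series
\begin{equation*}
w(r,\hat{x}) = \sum_\ell f_\ell \, \frac{H_\ell(kr)}{H_\ell(kR)} \, Y_\ell(\hat{x}),
\end{equation*}
where $H_\ell$ denotes the outgoing cylindrical or spherical Hankel function of order $\ell$, so that differentiating in $r$ at $r=R$ produces the Fourier-multiplier representation $\Lambda f = \sum_\ell \lambda_\ell f_\ell Y_\ell$ with symbol $\lambda_\ell = k H_\ell'(kR)/H_\ell(kR)$. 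Using the equivalence $\norm{f}^2_{H^s(\partial B_R)} \asymp \sum_\ell (1+|\ell|)^{2s}|f_\ell|^2$, the mapping property $\Lambda : H^{1/2}(\partial B_R) \to H^{-1/2}(\partial B_R)$ reduces to the symbol estimate $|\lambda_\ell| \leq C(1+|\ell|)$, which follows from the standard large-order asymptotics $H_\ell'(z)/H_\ell(z) \sim -\ell/z$ for fixed $z$.

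For the coercive comparison operator I would take $\Lambda_0$ to be the Dirichlet-to-Neumann map associated with the exterior modified Helmholtz problem $\Delta w_0 - w_0 = 0$ in $\mathbb{R}^d \setminus \overline{B_R}$ with decay at infinity. This problem is dissipative, so the resulting multiplier $\lambda_\ell^0$ is real and strictly negative with leading behaviour $-|\ell|/R$. The Fourier representation directly gives
\begin{equation*}
-\int_{\partial B_R} \overline{f}\, \Lambda_0 f \, \text{d}s = \sum_\ell |\lambda_\ell^0|\, |f_\ell|^2 \geq C \sum_\ell (1 + |\ell|)\, |f_\ell|^2 \geq C' \norm{f}^2_{H^{1/2}(\partial B_R)},
\end{equation*}
which is the required coercivity bound.

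For the compactness of $\Lambda - \Lambda_0$, the key point is that both operators share the same principal symbol $-|\ell|/R$. Sharpening the Hankel and modified Bessel quotients yields $\lambda_\ell - \lambda_\ell^0 = \mathcal{O}(1/|\ell|)$ as $|\ell|\to\infty$, so the difference is a Fourier multiplier of negative order and maps $H^{1/2}(\partial B_R)$ boundedly into $H^{3/2}(\partial B_R)$. Compactness as a map $H^{1/2} \to H^{-1/2}$ then follows from the compact Sobolev embedding $H^{3/2}(\partial B_R) \hookrightarrow H^{-1/2}(\partial B_R)$ on the compact manifold $\partial B_R$.

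The main technical obstacle is obtaining the uniform large-order Bessel-function asymptotics, and in particular verifying that the exterior Helmholtz and exterior modified-Helmholtz DtN operators have exactly the same principal symbol, so that the remainder $\lambda_\ell - \lambda_\ell^0$ is genuinely of negative order. Once that expansion is established, boundedness, coercivity, and compactness all reduce to elementary estimates on the Fourier coefficients $f_\ell$; alternatively, one can simply invoke the general proof of Theorem 5.22 in \cite{cakoni-colton}, which carries out exactly this computation.
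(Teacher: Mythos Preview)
Your approach is correct and is precisely the standard separation-of-variables argument behind Theorem~5.22 in \cite{cakoni-colton}; the paper does not give an independent proof of this lemma but simply cites that reference, so your sketch in fact fills in exactly what the paper defers to.
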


We write the variational formulation \eqref{vf} as 
$$a(u^s , \varphi ) = \ell(\varphi ) \quad \text{for all} \quad \varphi \in \widetilde{H}^{1}(B_R)$$
where $a : \widetilde{H}^{1}(B_R) \times \widetilde{H}^{1}(B_R) \rightarrow \mathbb{C}$ is a sesquilinear form defined as
\begin{align}\label{a}
a (u^s , \varphi ) = \int_{B_R} \nabla u^s \cdot \nabla \overline{\varphi}  &- k^2 n u^s  \overline{\varphi} \, \text{d}x \nonumber  \\
&+ \int_{\partial D}  \mu \nabla_{\partial D} u^s \cdot \nabla_{\partial D} \overline{\varphi}  + \gamma  u^s \overline{\varphi} \, \text{d}s - \int_{\partial B_R} \overline{\varphi} \Lambda u^s \, \text{d}s
\end{align}
and $\ell: \widetilde{H}^{1}(B_R) \rightarrow \mathbb{C}$ is a bounded conjugate linear form defined as
\begin{equation}
\ell (\varphi) = k^2 \int_{D}  (n-1) u^i \overline{\varphi} \, \text{d}x - \int_{\partial D}  \mu \nabla_{\partial D} u^i \cdot \nabla_{\partial D} \overline{\varphi} + \gamma u^i  \overline{\varphi} \, \text{d}s.
\end{equation}
From \eqref{a}, we write $a = a_1 + a_2$, where
$$a_1 (u^s , \varphi ) = \int_{B_R} \nabla u^s \cdot \nabla \overline{\varphi} \, \text{d}x - \int_{\partial B_R} \overline{\varphi} \Lambda_0 u^s \, \text{d}s + \int_{\partial D}  \mu \nabla_{\partial D} u^s \cdot \nabla_{\partial D} \overline{\varphi}  + \gamma  u^s \overline{\varphi} \, \text{d}s$$
and 
$$ a_2 (u^s , \varphi ) = - k^2 \int_{B_R} n u^s \overline{\varphi} \, \text{d}s - \int_{\partial B_R} \overline{\varphi} (\Lambda - \Lambda_0)u^s \, \text{d}s$$
where $\Lambda_0$ is the operator defined in Lemma \ref{dtn_0}. By the boundedness of $\Lambda_0$ and the Trace Theorem, $a_1$ is bounded. Then by the Riesz Representation Theorem, there exists a bounded linear operator $A_1 :  \widetilde{H}^{1}(B_R) \rightarrow \widetilde{H}^{1}(B_R)$ such that
$$ a_1 (u^s , \varphi ) = (A_1 u^s , \varphi)_{\widetilde{H}^{1}(B_R)} \quad \text{for all} \quad \varphi \in \widetilde{H}^{1}(B_R).$$
Furthermore, by the sign assumptions \eqref{gamma-bounds} and \eqref{mu-bounds} on the boundary coefficients and the estimate from Lemma \ref{dtn_0} we have that
\begin{align*}
	|a_1(u^s , u^s)|  &\geq  \norm{ \nabla u^s}^{2}_{L^{2}(B_R)} - \int_{\partial B_R} \overline{u^s} \Lambda_0 u^s \, \text{d}s + \int_{\partial D}  \text{Re}(\mu ) | \nabla_{\partial D} u^s |^2 + \text{Re}(\gamma ) | u^s |^2 \, \text{d}s \\
	&\geq \norm{ \nabla u^s}^{2}_{L^{2}(B_R)} + C \norm{u^s}^{2}_{H^{1/2}(\partial B_R)} +\text{min} \{ \gamma_{\text{min}} ,  \mu_{\text{min}} \} \norm{ u^s}^{2}_{H^1 (\partial D )} \\
	&\geq \alpha \norm{ u^s}^{2}_{\widetilde{H}^{1}(B_R)}
\end{align*} 
where $C>0$ is the constant from Lemma \ref{dtn_0}, and $\alpha>0$ is a positive constant. Thus, $a_1$ is coercive and the Lax-Milgram Lemma implies that the operator $A_1:  \widetilde{H}^{1}(B_R) \rightarrow \widetilde{H}^{1}(B_R)$ has a bounded inverse. Once again, by the Riesz Representation Theorem, there exists a bounded linear operator $A_2: \widetilde{H}^{1}(B_R) \rightarrow \widetilde{H}^{1}(B_R)$ such that
$$a_2 (u^s , \varphi ) = (A_2 u^s , \varphi)_{\widetilde{H}^{1}(B_R)} \quad \text{for all} \quad \varphi \in \widetilde{H}^{1}(B_R).$$
By the compactness of $\Lambda - \Lambda_0$ and the compact embedding of $H^1 (B_R)$ into $L^2 (B_R)$, it follows that $A_2$ is compact. Lastly, by the Riesz Representation Theorem again, there exists a unique $f \in \widetilde{H}^{1}(B_R)$ such that
$$ \ell (\varphi ) = (f , \varphi)_{\widetilde{H}^{1}(B_R)}\quad \text{for all} \quad \varphi \in \widetilde{H}^{1}(B_R).$$
Thus, the variational formulation \eqref{vf} is equivalent to the following problem, which is an operator equation:
\begin{equation}
(A_1 + A_2) u^s = f \quad \text{ where } \quad \| f\|_{\widetilde{H}^{1}(B_R)} \leq C \left\{ \| u^i \|_{{H}^{1}(D)} + \| u^i \|_{{H}^{1}(\partial D)} \right\}.
\end{equation}
Notice that, since $A_1$ is invertible and $A_2$ is compact this implies that the scattering problem \eqref{bvp}--\eqref{2nd-bc} is Fredholm of index zero, i.e. the problem is well--posed provided we have uniqueness. To this end, we now show that the solution to our scattering problem is unique. 

\begin{theorem}
The direct scattering operator problem \eqref{bvp}--\eqref{2nd-bc} has at most one solution. Moreover, there is a unique solution to  \eqref{bvp}--\eqref{2nd-bc} satisfying 
$$\| u^s  \|_{\widetilde{H}^{1}(B_R)} \leq C \left\{ \| u^i \|_{{H}^{1}(D)} + \| u^i \|_{{H}^{1}(\partial D)} \right\}. $$  
\end{theorem}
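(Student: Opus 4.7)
The plan is to exploit the Fredholm-of-index-zero structure already assembled above: since $A_1:\widetilde{H}^{1}(B_R)\to\widetilde{H}^{1}(B_R)$ is invertible and $A_2$ is compact, $A_1+A_2$ is Fredholm of index zero, so existence and the norm bound follow from the Fredholm alternative applied to $(A_1+A_2)u^s=f$ together with the estimate on $\|f\|_{\widetilde{H}^{1}(B_R)}$ already noted. Everything therefore reduces to uniqueness, so I take $u^i=0$ and aim to show any $u^s\in\widetilde{H}^{1}(B_R)$ satisfying $a(u^s,\varphi)=0$ for all test functions $\varphi$ must vanish.

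The key step is to test the homogeneous identity against $\varphi=u^s$ and take imaginary parts, which annihilates the $|\nabla u^s|^2$ and other real contributions and leaves
\begin{equation*}
-k^2\int_{D}\text{Im}(n)\,|u^s|^2\,\text{d}x+\int_{\partial D}\bigl[\,\overline{\nabla_{\partial D}u^s}\cdot\text{Im}(\mu)\nabla_{\partial D}u^s+\text{Im}(\gamma)\,|u^s|^2\bigr]\text{d}s=\text{Im}\int_{\partial B_R}\overline{u^s}\,\Lambda u^s\,\text{d}s.
\end{equation*}
The sign assumptions \eqref{refraction-index}, \eqref{gamma-bounds}, \eqref{mu-bounds} render the left-hand side non-positive, while the classical fact $\text{Im}\int_{\partial B_R}\overline{w}\,\Lambda w\,\text{d}s\geq 0$ for the radiating Helmholtz DtN renders the right-hand side non-negative. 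Both sides must therefore vanish; in particular $\text{Im}\int_{\partial B_R}\overline{u^s}\,\Lambda u^s\,\text{d}s=0$.

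With this radiating-flux identity in hand, Rellich's lemma applied to the radiating extension of $u^s$ to $\R^d\setminus\ov{B_R}$ gives $u^s\equiv 0$ there; unique continuation across the annular region $B_R\setminus\ov{D}$, where $u^s$ satisfies the constant-coefficient Helmholtz equation, propagates the vanishing to all of $\R^d\setminus\ov{D}$. Consequently the exterior traces $u^s|_+$ and $\partial_\nu u^s|_+$ both vanish on $\partial D$. The continuity $[\![u]\!]|_{\partial D}=0$ built into $\widetilde{H}^{1}(B_R)$ forces $u^s|_-=0$, so $\mathscr{B}(u^s)=0$ on $\partial D$, and the jump condition \eqref{2nd-bc} then yields $\partial_\nu u^s|_-=0$. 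A final unique-continuation argument for $\Delta u^s+k^2nu^s=0$ in $D$ with vanishing Cauchy data closes the case $u^s\equiv 0$. The main obstacle I anticipate is the Hermitian bookkeeping in the imaginary-part identity, namely checking that the quadratic form defined by the surface integral $\int_{\partial D}\mu\nabla_{\partial D}u^s\cdot\nabla_{\partial D}\overline{u^s}\,\text{d}s$ really pairs with assumption \eqref{mu-bounds} to give the correct sign; once that, the radiating-flux inequality, and the regularity required for unique continuation with $n\in L^\infty(D)$ are in place, the Rellich-plus-unique-continuation chain is routine.
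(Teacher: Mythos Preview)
Your proposal is correct and follows essentially the same route as the paper: test the homogeneous variational identity with $\varphi=u^s$, take imaginary parts to obtain $\text{Im}\int_{\partial B_R}\overline{u^s}\,\Lambda u^s\,\text{d}s\leq 0$, invoke Rellich plus unique continuation (which the paper packages as a citation to Theorem~3.6 in \cite{cakoni-colton}) to annihilate $u^s$ in $\R^d\setminus\ov{D}$, then use the transmission conditions and an interior unique-continuation argument to finish. The only cosmetic difference is that the paper phrases the final interior step as ``Holmgren's Theorem'' where you (arguably more carefully, given only $n\in L^\infty$) invoke unique continuation.
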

\begin{proof}
In order to prove the claim, by equivalence to the variational formulation and linearity it is sufficient to show that Null$(A_1 + A_2)=\{0\}$, i.e. the incident field $u^i = 0$ implies that $u^s=0$. Therefore, by taking $u^i=0$ and letting $v = u^s$ in \eqref{vf} we get 
$$ \text{Im} \int_{\partial B_R} \overline{u^s} \Lambda u^s \, \text{d}s =  \int_{\partial D} \text{Im}(\mu) \rvert \nabla_{\partial D} u^s \rvert^{2} + \text{Im}(\gamma) \rvert u^s \rvert^{2} \, \text{d}s - k^2 \int_{B_R} \text{Im}(n) \rvert u^s \rvert^{2} \, \text{d}x\leq 0$$
by the positivity assumption of on the refraction index and the sign assumptions on the boundary coefficients. By appealing to Theorem 3.6 in \cite{cakoni-colton} we have that $u^s = 0$ in $\mathbb{R}^d \setminus \overline{D}$. So on the boundary of the scatterer $\partial D$ we have that 
$$ 0= u^s_{+} \big \rvert_{\partial D} = u^s_{-} \big \rvert_{\partial D} \quad \text{ since } \quad  [\![ u^s]\!] \big \rvert_{\partial D} = 0 $$
and  $0=  \partial_\nu u^s_{+} \big \rvert_{\partial D} = \partial_{\nu} u^s_{-} \big \rvert_{\partial D}$ by the boundary condition $[\![ \partial_{\nu} u^s]\!] \big \rvert_{\partial D} = \mathscr{B}(u^s)$ on $\partial D$. This implies that in the scatterer $D$ the scattered field $u^s$ satisfies
$$\Delta u^s + k^2 n u^s = 0 \enspace \text{in} \enspace D \quad \text{with} \quad u^s_{-} \big \rvert_{\partial D} = \partial_{\nu} u^s_{-} \big \rvert_{\partial D} =0.$$
By Holmgren's Theorem (see for e.g. \cite{holmgren}), we have that $u^{s} = 0$ in $D$, proving the claim. 
\end{proof}

The above result proves that \eqref{bvp} with the boundary condition \eqref{2nd-bc} is well--posed by the Fredholm Alternative. {\color{black} Now that we know that the direct problem is well--posed, we can consider the inverse shape problem of recovering the scatterer $D$ from measured scattering data.} In the following section, we derive a direct sampling method for recovering the unknown scatterer from the measured Cauchy data.

%%%%%%%%%%%%%%%%%%%%%%%%%%%%%%%%%%
\section{The Inverse Scattering Problem}\label{inv-scatter}
In this section, we focus on defining an imaging functional that will reconstruct the scatterer $D$. We define the incident field as the plane wave
$$u^{i} (x,\hat{y}) = \text{e}^{\text{i}kx \cdot \hat{y}}, \enspace 	x \in \mathbb{R}^{d}, \enspace \hat{y} \in \mathbb{S}^{d-1} := \brac{x \in \mathbb{R}^d \enspace \text{such that} \enspace |x|=1}, $$
where $\hat{y}$ is the direction vector of propagation. Thus, the total and scattered field will also depend on $\hat{y}$, i.e. $u(x,\hat{y}) = u^{i}(x,\hat{y}) + u^{s}(x,\hat{y})$. We also consider a domain $\Omega \subset \mathbb{R}^d$ such that $\overline{D} \subset \Omega$. The boundary of this domain $\partial\Omega$ is the measurements curve/surface where we collect the Cauchy data for multiple incident directions. Indeed, we assume that $u^{s}(\cdot , \hat{y}_j)$ and $\partial_{\nu}u^{s}(\cdot , \hat{y}_j) $ on $\partial \Omega$ is given for $\hat{y}_j \in \mathbb{S}^{d-1}$ where $j=1,\hdots,N$. With this, we will determine the scatterer $D$. We now define the free-space Green's function for the Helmholtz equation $\Phi(x,y)$ as
$$ \Phi(x,y)  =   \dfrac{\text{i}}{4} H^{(1)}_{0}(k \rvert x - y \rvert ) \, \, \text{in $\mathbb{R}^2$} \quad \text{ and }  \quad \Phi(x,y)  =  \frac{ \text{e}^{ \text{i}k \rvert x- y \rvert}}{4 \pi \rvert x- y \rvert}  \, \, \text{in $\mathbb{R}^3$}.$$ 
Here we assume $x \neq y$ with $H^{(1)}_{0}$ being the zeroth order Hankel function of the first kind. We now study a function that will serve as the foundation for our imaging functional to reconstruct the scatterer. This function is motivated by the imaging functionals discussed in \cite{rgf-granados1,harris-le,lan-nguyen,sampling-with-deeplearning}. For sampling points $z \in \mathbb{R}^d$ we, define the imaging functional to be studied for the inverse scattering problem $W(z) : \mathbb{R}^d \rightarrow \mathbb{R}_{\geq 0}$ as
\begin{equation}\label{wz-raw}
W(z) = \sum_{j=1}^{J} \bigg \rvert \int_{\partial \Omega} u^s (x , \hat{y}_j) \partial_{\nu (x)} \text{Im}\Phi(x,z) - \partial_{\nu (x)}  u^s (x , \hat{y}_j) \text{Im}\Phi(x,z) \, \text{d}s(x) \bigg \rvert ,
\end{equation}
where again $u^s (x , \hat{y}_j)$ is the scattered field satisfying \eqref{bvp}--\eqref{2nd-bc} with incident field $u^{i} (x,\hat{y}) = \text{e}^{\text{i}kx \cdot \hat{y}}$. It is well known that 
\begin{equation}\label{im-phi}
\text{Im}\Phi(x,y) =  \dfrac{1}{4} J_{0}(k \rvert x - y \rvert ) \, \, \text{in $\mathbb{R}^2$} \quad \text{ and }  \quad \text{Im}\Phi(x,y) = \frac{k}{4 \pi} j_{0}(k \rvert x - y \rvert )   \, \, \text{in $\mathbb{R}^3$}
\end{equation}
where $J_0$ and $j_0$ are the Bessel function and spherical Bessel function of the first kind, respectively. The following result demonstrates the dependence of this imaging functional on the scatterer. More specifically, it details how measured surface data, namely $u^s$ and $\partial_{\nu} u^s$ on $\partial \Omega$, are connected to the scatterer $D$. {\color{black} This connection will make the dependance of the imaging functional $W(z)$ on the scatterer explicit. Indeed, it is clear that $W(z)$ depends on the scatterer but it is not clear how it depends on $D$. The following result will allow use to see the dependance explicitly which can be exploited to show that plotting $W(z)$ can recover $D$. }

\begin{lemma}\label{wz} 
The imaging functional $W(z)$ defined in \eqref{wz-raw} satisfies 
\begin{equation*}\label{wz-d}
W(z) = \sum_{j=1}^{J} \bigg \rvert \int_{D} k^2 (n(x)-1) \mathrm{Im} \Phi (x,z) u(x, \hat{y}_j) \, \mathrm{d}x - \int_{\partial D} \mathrm{Im} \Phi (x,z) \mathscr{B}(u(x, \hat{y}_j)) \, \mathrm{d}s(x) \bigg \rvert
\end{equation*}
where $u (x , \hat{y})$ is the total field corresponding to \eqref{bvp}--\eqref{2nd-bc} with incident field $u^{i} (x,\hat{y}) = \mathrm{e}^{\mathrm{i}kx \cdot \hat{y}}$. 
\end{lemma}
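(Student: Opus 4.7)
The plan is to convert the surface integrals over the measurement curve $\partial\Omega$ in \eqref{wz-raw} into integrals over $D$ and $\partial D$ by applying Green's second identity to $u^s(\cdot,\hat y_j)$ and the test function $v(x):=\mathrm{Im}\,\Phi(x,z)$ separately in $\Omega\setminus\overline{D}$ and in $D$, and then combining the two identities using the transmission conditions on $\partial D$. The key preliminary observation is that, by \eqref{im-phi}, $v$ is smooth everywhere and satisfies $\Delta v+k^2v=0$ throughout $\mathbb{R}^d$: the singularity of $\Phi$ lies entirely in its real part, so no $\delta$-contribution appears when applying Green's identity.

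In the exterior region $n=1$ and hence $\Delta u^s+k^2u^s=0$; Green's second identity therefore has no volume term, and---being careful that the outward normal to $\Omega\setminus\overline{D}$ on $\partial D$ is $-\nu$---I obtain
\[
\int_{\partial\Omega}\!\bigl[u^s\,\partial_\nu v - v\,\partial_\nu u^s\bigr]\,\mathrm{d}s
=\int_{\partial D}\!\bigl[u^s\,\partial_\nu v - v\,\partial_\nu u^s_{+}\bigr]\,\mathrm{d}s.
\]
In the interior, I would first rewrite the source using $u=u^i+u^s$ together with $\Delta u^i+k^2u^i=0$ to convert $\Delta u^s+k^2nu^s=-k^2(n-1)u^i$ into the cleaner form $\Delta u^s+k^2u^s=-k^2(n-1)u$. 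Green's second identity in $D$ then yields
\[
\int_{\partial D}\!\bigl[u^s\,\partial_\nu v - v\,\partial_\nu u^s_{-}\bigr]\,\mathrm{d}s
=\int_D k^2(n-1)\,u\,v\,\mathrm{d}x.
\]
Both applications are justified by the regularity $u^s\in\widetilde{H}^1(B_R)$ from the previous section together with the smoothness of $v$.

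To finish, I would subtract the interior identity from the exterior one. Since $u^s$ is continuous across $\partial D$ (this is built into membership in $\widetilde{H}^1(B_R)$), the $u^s\,\partial_\nu v$ boundary contributions cancel, and the surviving term reads $-\int_{\partial D} v\,(\partial_\nu u^s_{+}-\partial_\nu u^s_{-})\,\mathrm{d}s$, which by the second-order Robin condition \eqref{2nd-bc} equals $-\int_{\partial D} v\,\mathscr{B}(u)\,\mathrm{d}s$. Substituting into \eqref{wz-raw} and taking absolute values term-by-term in the sum over $j$ yields exactly the claimed representation. I do not anticipate a serious obstacle; the only items requiring care are the orientation of $\nu$ on $\partial D$ as a boundary of each subdomain, and the rewriting of the interior equation so that $(n-1)u$ rather than $(n-1)u^i$ appears in the volume integrand that matches the statement.
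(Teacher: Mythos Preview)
Your proposal is correct and follows essentially the same route as the paper: Green's second identity applied to $u^s$ and $\mathrm{Im}\,\Phi(\cdot,z)$ in $\Omega\setminus\overline{D}$ and in $D$, combined via the transmission conditions on $\partial D$. The only cosmetic difference is ordering---the paper substitutes the Robin condition $\partial_\nu u^s_{+}=\partial_\nu u^s_{-}+\mathscr{B}(u)$ into the exterior identity before invoking Green in $D$, whereas you compute both identities first and then subtract---but the ingredients and logic are identical.
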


\begin{proof}
Notice, that due to the fact that both $u^s (x , \hat{y})$ and $\text{Im}\Phi(x,z)$ solve the Helmholtz equation in $\Omega \setminus \overline{D}$ we have that  
$$0= \int_{\Omega \setminus \overline{D}} u^s (x, \hat{y}) \Delta_{x} \text{Im} \Phi(x,z) - \text{Im} \Phi(x,z) \Delta_{x} u^s (x, \hat{y}) \, \text{d}x.$$ 
Now, by appealing to Green's second identity in $\Omega \setminus \overline{D}$ we have that 
\begin{multline*}
0 = \int_{\partial \Omega} u^{s}(x, \hat{y}) \partial_{\nu (x)} \text{Im} \Phi (x,z) -  \partial_{\nu (x)} u^{s}(x, \hat{y}) \text{Im} \Phi (x,z) \, \text{d}s(x) \\ - \int_{\partial D} u^{s}(x, \hat{y}) \partial_{\nu (x)} \text{Im} \Phi (x,z) -  \partial_{\nu (x)} u^{s}_{+}(x, \hat{y}) \text{Im} \Phi (x,z) \, \text{d}s(x).
\end{multline*}
In order to prove the claim, we now use the boundary condition \eqref{2nd-bc}  to obtain that 
\begin{align}
\int_{\partial \Omega} u^{s}(x, \hat{y})  \partial_{\nu (x)} \text{Im} \Phi (x,z) &-  \partial_{\nu (x)} u^{s}(x, \hat{y}) \text{Im} \Phi (x,z) \, \text{d}s(x)= \nonumber  \\
 &\int_{\partial D} u^{s}(x, \hat{y}) \partial_{\nu (x)} \text{Im} \Phi (x,z) -   \partial_{\nu (x)} u^{s}_{-}(x, \hat{y})  \text{Im} \Phi (x,z) \, \text{d}s(x) \nonumber  \\
  &\hspace{1.25in}  -  \int_{\partial D}\mathscr{B}(u(x, \hat{y}_j)) \text{Im} \Phi (x,z) \, \text{d}s(x). \label{rgf-equality1}
\end{align}
By Green's second identity in ${D}$ and \eqref{bvp} we see that 
\begin{align}
\int_{\partial D} u^{s}(x, \hat{y}) \partial_{\nu (x)} \text{Im} \Phi (x,z) &  -  \partial_{\nu (x)} u^{s}_{-}(x, \hat{y}) \text{Im} \Phi (x,z) \, \text{d}s(x)  \nonumber \\
 &=\int_{D} u^s (x, \hat{y}) \Delta_{x} \text{Im} \Phi (x,z) - \text{Im} \Phi (x,z) \Delta_{x} u^s (x, \hat{y}) \, \text{d}x  \nonumber \\
 &=\int_{D} k^2 (n(x)-1) \text{Im} \Phi (x,z) u(x, \hat{y}) \, \text{d}x. \label{rgf-equality2}
\end{align}
Therefore, by combining the equalities in \eqref{rgf-equality1}--\eqref{rgf-equality2}, we obtain that 
\begin{align*}
 \int_{\partial \Omega} u^{s}(x, \hat{y})  & \partial_{\nu (x)} \text{Im} \Phi (x,z) -  \partial_{\nu (x)} u^{s}(x, \hat{y}) \text{Im} \Phi (x,z) \, \text{d}s(x) \\ 
& =\int_{D} k^2 (n(x)-1) \text{Im} \Phi (x,z) u(x, \hat{y}) \, \text{d}x - \int_{\partial D} \mathrm{Im} \Phi (x,z) \mathscr{B}(u(x, \hat{y})) \, \mathrm{d}s(x) 
\end{align*}
which proves the claim.
\end{proof}

We know from \eqref{im-phi} that $\text{Im} \Phi(x,z)$ has maximal values whenever $x \approx z$ and decays as dist$(z,x) \rightarrow \infty$ for $d=2$ and 3. A consequence of this and the above result is that the imaging functional $W(z)$ is maximal in the closure of the scatterer $D$, where its values dissipate in the exterior of $D$. The following theorem formally states the decay rate of $W(z)$ as the sampling points $z$ tend away from $\overline{D}$.

\begin{theorem}\label{wz-decay}
The imaging functional $W(z)$ defined in \eqref{wz-raw} satisfies 
$$W(z) = \mathcal{O}\Big( \mathrm{dist}(z, \overline{D})^{\frac{1-d}{2}} \Big) \quad \text{as} \quad \mathrm{dist}(z,\overline{D}) \rightarrow \infty $$
for every $z \notin \overline{D}$ where $d=$ 2 or 3
\end{theorem}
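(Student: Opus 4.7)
The plan is to apply the equivalent representation of $W(z)$ provided by Lemma \ref{wz} and estimate the two resulting families of integrals using the explicit expressions for $\text{Im}\Phi$ in \eqref{im-phi} together with the standard large-argument asymptotics of Bessel and spherical Bessel functions. By the triangle inequality,
$$W(z) \leq \sum_{j=1}^{J} |I_j(z)| + \sum_{j=1}^{J} |J_j(z)|,$$
where $I_j(z)$ is the volume integral over $D$ and $J_j(z)$ is the surface integral over $\partial D$ appearing in Lemma \ref{wz}. I would bound each contribution separately and then sum over $j$.

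For the volume term $I_j(z)$, the asymptotic behaviors $J_0(t) = \mathcal{O}(t^{-1/2})$ and $j_0(t) = \mathcal{O}(t^{-1})$ as $t \to \infty$ combined with \eqref{im-phi} yield a constant $C$, depending only on $k$ and $d$, such that
$$|\text{Im}\Phi(x,z)| \leq C |x-z|^{(1-d)/2}$$
whenever $|x-z|$ is bounded below away from zero. Since $z \notin \overline{D}$ and $|x-z| \geq \text{dist}(z,\overline{D})$ for every $x \in \overline{D}$, this uniform estimate together with the fact that $u(\cdot, \hat{y}_j) \in L^2(D)$ (a consequence of the well-posedness of \eqref{bvp}--\eqref{2nd-bc} and the smoothness of $u^i$) gives $|I_j(z)| \leq C \, \text{dist}(z,\overline{D})^{(1-d)/2}$.

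For the surface term $J_j(z)$, the subtlety is that $\mathscr{B}(u) = -\nabla_{\partial D} \cdot \mu \nabla_{\partial D} u + \gamma u$ only lives in $H^{-1}(\partial D)$. Since $\partial D$ is a closed Lipschitz manifold without boundary, tangential integration by parts produces no boundary contributions, and noting that $\text{Im}\Phi(\cdot, z)$ is real-valued, one rewrites
$$J_j(z) = \int_{\partial D} \mu \nabla_{\partial D} u(x,\hat{y}_j) \cdot \nabla_{\partial D} \text{Im}\Phi(x,z) \, \text{d}s(x) + \int_{\partial D} \gamma \, u(x,\hat{y}_j) \, \text{Im}\Phi(x,z) \, \text{d}s(x).$$
Because $J_0' = -J_1$ and $j_0'$ decay at the same rates as $J_0$ and $j_0$ themselves, the pointwise bound $|\nabla_x \text{Im}\Phi(x,z)| \leq C|x-z|^{(1-d)/2}$ holds as well. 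Cauchy-Schwarz combined with $u(\cdot, \hat{y}_j) \in H^1(\partial D)$, provided by $u^s \in \widetilde H^1(B_R)$ and $u^i \in H^1(\partial D)$, yields $|J_j(z)| \leq C \, \text{dist}(z,\overline{D})^{(1-d)/2}$.

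The main obstacle is the boundary integral: a priori $\mathscr{B}(u)$ is a second-order distributional object on $\partial D$, and one must avoid taking derivatives of $u$ beyond the $H^1(\partial D)$ regularity available. The closedness of $\partial D$ is exactly what enables the clean transfer of one tangential derivative onto the smooth kernel $\text{Im}\Phi(\cdot,z)$, whose derivatives have the same decay rate. Summing the two estimates over $j = 1, \ldots, J$ then yields the claimed $\mathcal{O}\big(\text{dist}(z,\overline{D})^{(1-d)/2}\big)$ decay.
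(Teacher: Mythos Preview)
Your proposal is correct and follows essentially the same approach as the paper's proof: both use the representation from Lemma~\ref{wz}, split into the volume and surface contributions, transfer one tangential derivative from $\mathscr{B}(u)$ onto $\mathrm{Im}\,\Phi(\cdot,z)$ via integration by parts on the closed manifold $\partial D$, and then invoke the large-argument Bessel/spherical Bessel asymptotics together with the $\widetilde{H}^1$-regularity of the total field coming from well-posedness. Your treatment is in fact slightly more explicit than the paper's about why the integration by parts is legitimate and why $H^1(\partial D)$ regularity of $u$ suffices.
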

\begin{proof}
Recall that $W(z)$ satisfies \eqref{wz-d}, which is a sum of a volume integral and a boundary integral on $D$ and $\partial D$, respectively. For the volume integral, we have that
\begin{align*} 
\bigg \rvert \int_{D} k^2 (n(x)-1) \text{Im} \Phi (x,z) &u(x, \hat{y}_j) \, \text{d}x  \bigg \rvert \\
&\leq C \norm{\text{Im}\Phi(\cdot , z)}_{L^{2}(D)} \norm{u (\cdot , \hat{y}_j)}_{{L}^{2}(D)} \\
&\leq C \norm{\text{Im}\Phi(\cdot , z)}_{L^{2}(D)}  \left\{ \| u^i  (\cdot , \hat{y}_j)\|_{{H}^{1}(D)} + \| u^i  (\cdot , \hat{y}_j) \|_{{H}^{1}(\partial D)} \right\}
\end{align*}
where we have used the definition of the total field and well--posedness of \eqref{bvp}--\eqref{2nd-bc}. Now, for the boundary integral we have that 
\begin{align*} 
\int_{\partial D} \text{Im} \Phi (x,z) & \mathscr{B}(u(x, \hat{y}_j)) \, \text{d}s(x) \\
&= \int_{\partial D}  \mu(x) \grad_{\partial D} \text{Im} \Phi (x,z) \cdot \grad_{\partial D} u(x, \hat{y}_j)  + \gamma(x) \text{Im} \Phi (x,z)  u(x, \hat{y}_j) \, \text{d}s(x).
\end{align*} 
Again, we can estimate 
\begin{align*} 
\bigg \rvert \int_{\partial D} \text{Im} \Phi (x,z) & \mathscr{B}(u(x, \hat{y}_j)) \, \text{d}s(x)   \bigg \rvert \\
&\leq C \norm{\text{Im}\Phi(\cdot , z)}_{H^{1}(\partial D)} \norm{u (\cdot , \hat{y}_j)}_{H^{1}(\partial D)} \\
&\leq C \norm{\text{Im}\Phi(\cdot , z)}_{H^{1}(\partial D)}  \left\{ \| u^i  (\cdot , \hat{y}_j)\|_{{H}^{1}(D)} + \| u^i  (\cdot , \hat{y}_j) \|_{{H}^{1}(\partial D)} \right\}.
\end{align*}
Note that we have again used the well--posedness of \eqref{bvp}--\eqref{2nd-bc}. 

This gives that the imaging functional satisfies 
$$W(z) \leq C \left\{  \norm{\text{Im}\Phi(\cdot , z)}_{L^{2}( D)} +  \norm{\text{Im}\Phi(\cdot , z)}_{H^{1}(\partial D)} \right\}.$$
We appeal to the asymptotic behavior of $\text{Im}\Phi(x , z)$, as defined is \eqref{im-phi} 
$$ \text{Im}\Phi(x , z) = \mathcal{O} \Big ( |x-z|^{\frac{1-d}{2}} \Big ) \quad \text{as} \quad |x-z| \rightarrow \infty .$$
We also appeal to the estimates
$$ |\nabla \text{Im} \Phi (x,z)|  \leq k |J_{1}(k|x-z|)| \,\, \text{ for $d=2$, and }\,\,   |\nabla \text{Im} \Phi (x,z)|  \leq k |j_{1}(k|x-z|)| \,\, \text{ for $d=3$}$$
where $J_1$ and $j_1$ are the first-order Bessel function and spherical Bessel function of the first kind. We have that
$$J_{1}(k|x-z|) = \mathcal{O} \Big (|x-z|^{-\frac{1}{2}} \Big ) \quad \text{and } \quad j_{1}(k|x-z|) = \mathcal{O} \Big (|x-z|^{-1} \Big )$$ 
as $ |x-z| \rightarrow \infty$, which proves the claim by appealing to the definition of the surface gradient. 
\end{proof}

The above result implies that the imaging functional $W(z)$ decays as the sampling point moves away from the scatterer. Since $W(z)$ can be computed from the measured data we have a simple numerical method for recovering the scatterer $D$. By Theorem 2.3 in \cite{lan-nguyen} we have that our imaging functional is stable with respect to noisy data.

Recall, that by the radiation condition, we have that $\partial_{\nu} u^s \approx \text{i}ku^s$ when the boundary measurements are taken on a large-radius ball. In this case, the modified imaging functional is defined as
\begin{equation}\label{w_far}
W_{\text{far}} (z) := \sum_{j=1}^{J} \bigg \rvert \int_{\partial \Omega} u^s (x , \hat{y}_j) \partial_{\nu (x)} \text{Im}\Phi(x,z) - \text{i}k u^s (x , \hat{y}_j) \text{Im}\Phi(x,z) \, \text{d}s(x) \bigg \rvert .
\end{equation}
This modified imaging functional approximates $W(z)$ and only depends on the measured data $u^{s}(x,\hat{y})$.

%%%%%%%%%%%%%%%%%%%%%%%%%%%%%%%%%%%%%%%%%%%%%%%%
\section{Numerical Validation for the Inverse Scattering Problem}\label{numerical-section}
We provide numerical examples for the reconstruction of the scatterer $D$ using the imaging functional $W(z)$ as defined in \eqref{wz-raw}. We cover two distinct cases which are determined in the way in which the data, i.e. the scattered field $u^s (x,\hat{y})$, was computed on the boundary of $\Omega$. The first case uses separation of variables as well as the Lippmann-Schwinger integral equation to recover circular regions. For all cases, the measurements boundary $\partial \Omega$ is given by a circle centered at the origin. In the second case, we derive a system of boundary integral equations to compute the synthetic data where the scatterer $D$ is not necessarily circular.  Also, for most numerical examples in this section, we plot the normalized values of $W$. That is, we plot
$$W_{\text{nor}} (z) := \Bigg( \frac{W(z)}{\norm{W(z)}_{\infty}} \Bigg )^\rho$$ 
where $\rho>0$ is a parameter used for increasing the resolution of the reconstructions. In order to compute $W_{\text{nor}}(z)$, we approximate the boundary integral on $\partial \Omega$ with an 32--point Riemann sum.

Thus, we compute $W_{\text{nor}}(z)$ using synthetic data. See the Appendix for details on how the synthetic data is computed. 
We approximate the representation of this Cauchy data as $32 \times 32$ matrices given by
$${\bf{us}}= \big [ u^s ({x}_i , \hat{y}_j)\big ]^{32}_{i,j=1} \quad \text{and} \quad {\bf{Dus}} = \big [  \partial_{r} u^s ({x}_i , \hat{y}_j)\big ]^{32}_{i,j=1}$$
where ${x}_i =\hat{y}_i = \big( \text{cos}(\theta_i), \text{sin}(\theta_i)\big)$ where $\theta_i = 2\pi (i - 1)/32$ for $i = 1,\dots, 32$. We add random noise to this synthetic data, where we define
$$ {\bf{us}}_{\delta}= \big [ \textbf{us}(i,j) (1 + \delta \textbf{E}^{(1)}(i,j) )\big ]^{32}_{i,j=1} \quad \text{and} \quad {\bf{Dus}}_{\delta} = \big [ \textbf{Dus}(i,j) (1 + \delta \textbf{E}^{(2)}(i,j) )\big ]^{32}_{i,j=1},$$
where$ \| \textbf{E}^{(j)}\|_2 = 1$ for $j=1,2$. 
The matrices $\textbf{E}^{(j)} \in \mathbb{C}^{32 \times 32}$ have entries with real and imaginary parts uniformly distributed in $[-1,1]$, then normalized so that $\delta \in (0,1)$ represents the relative noise level added to the data. For our reconstructions, we sample from a uniform $150 \times 150$ grid point on the rectangular region $[-1, 1] \times [-1, 1]$ to recover the scatterer. {\color{black}In all our examples, we take the parameter $\rho=4$ to sharpen the resolution.}\\

\noindent{\bf Example 1a:} For the first example in Figure \ref{fig0}, we suppose that the scatterer $D = B(0,0.5)$. We assume that the refractive index is $n = 5$ and boundary parameters $\mu = 1.5$ and  $\gamma = 2$. In both subplots, we fix $\delta = 0.05$, which corresponds to $5 \%$ random noise.
\begin{figure}[H]
\centering 
\includegraphics[scale=0.15]{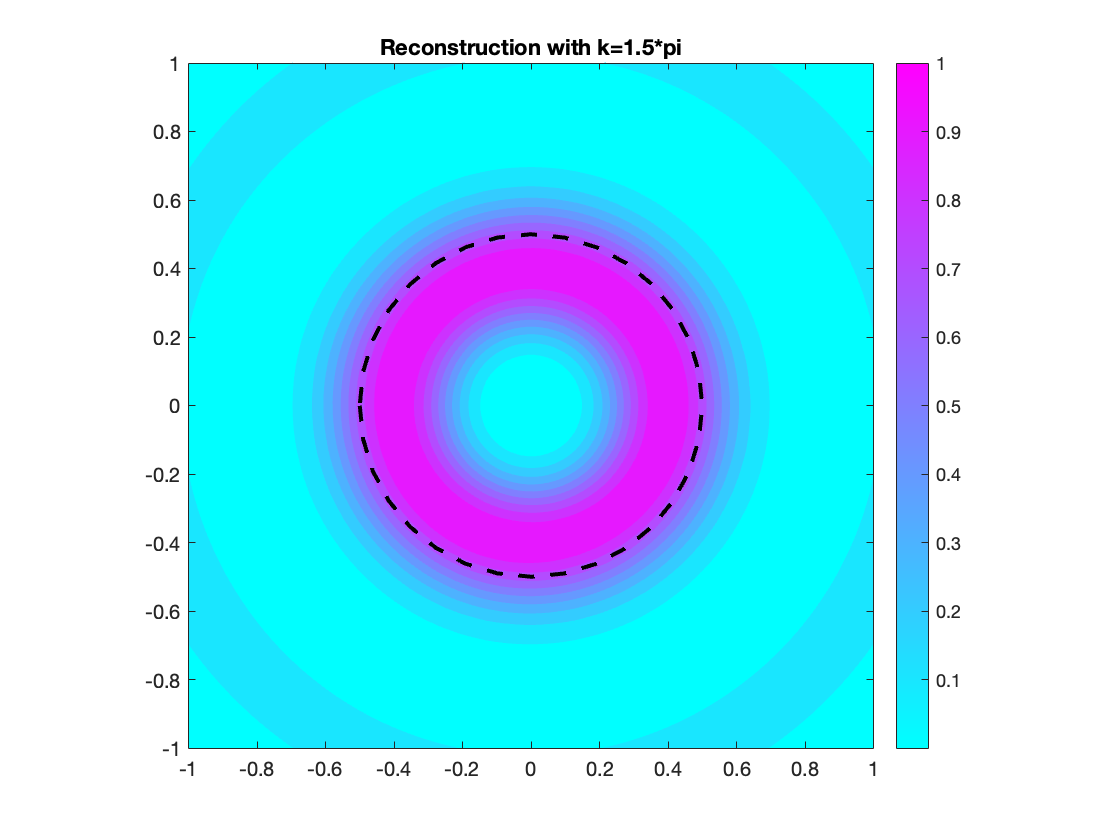}
\includegraphics[scale=0.15]{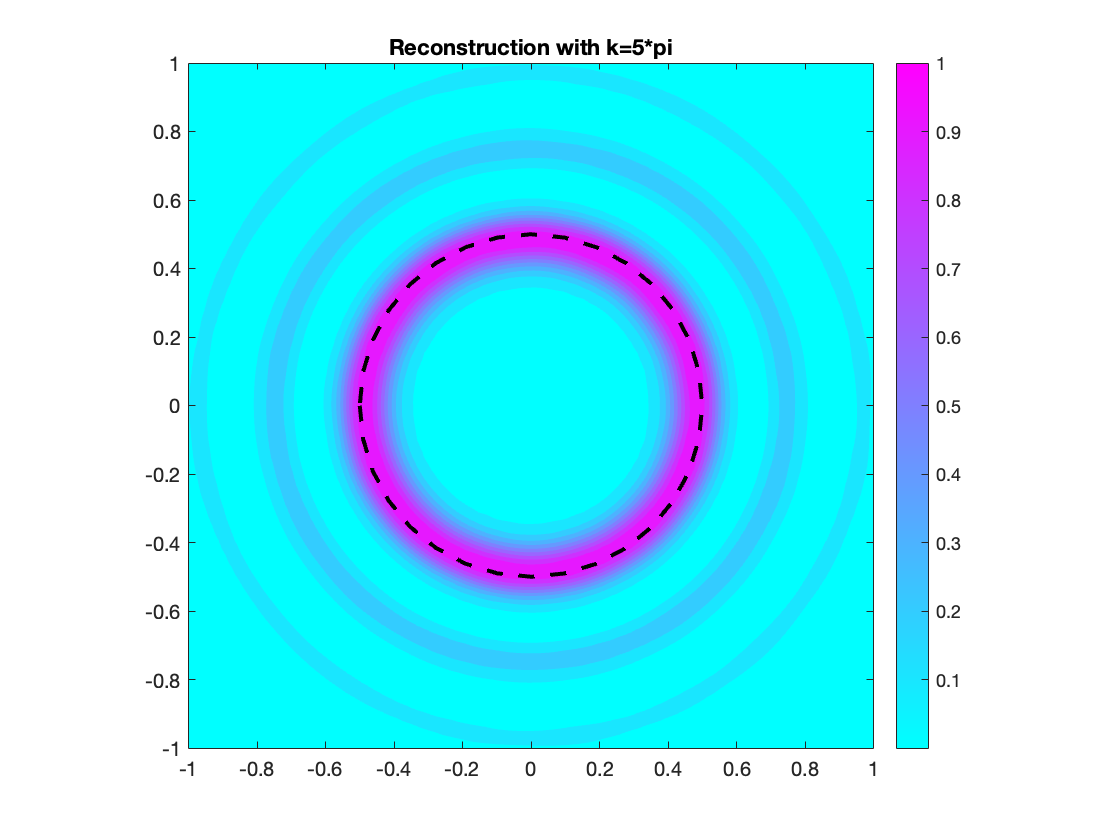}
\caption{Reconstruction of a circular region with radius $R=0.5$. On the left: wavenumber $k = 3\pi/2$. On the right: wavenumber $k = 5\pi$.}
\label{fig0}
\end{figure}

\noindent{\bf Example 1b: } Noticing that a higher wave number produces a sharper image, we want to demonstrate the stability of $W_{\text{nor}}(z)$ under random noise. Thus, we adopt the same settings as the previous sub example. That is, we suppose that the scatterer $D = B(0,0.5)$. Again, we assume that the refractive index is $n = 5$ and boundary parameters $\mu = 1.5$ and  $\gamma = 2$. Now, we fix the wavenumber to be $k = 5 \pi$ in Figure \ref{fig1}. 

\begin{figure}[H]
\centering 
\includegraphics[scale=0.15]{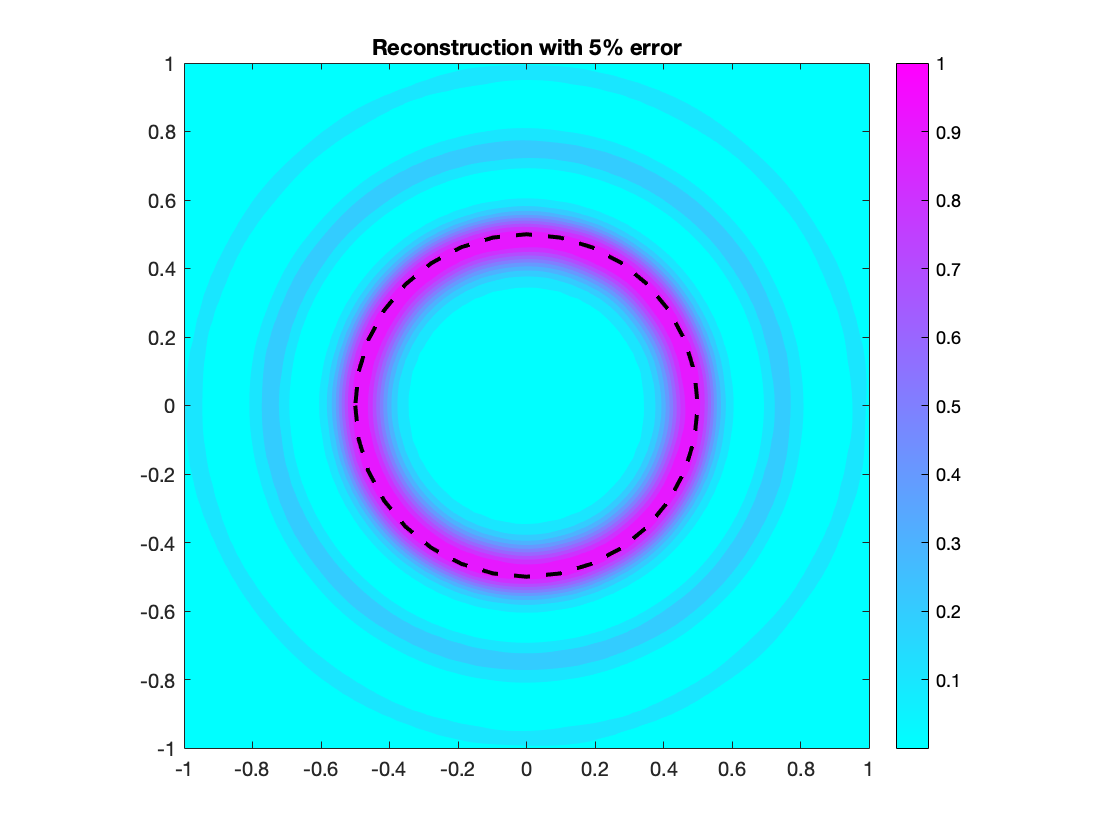}
\includegraphics[scale=0.15]{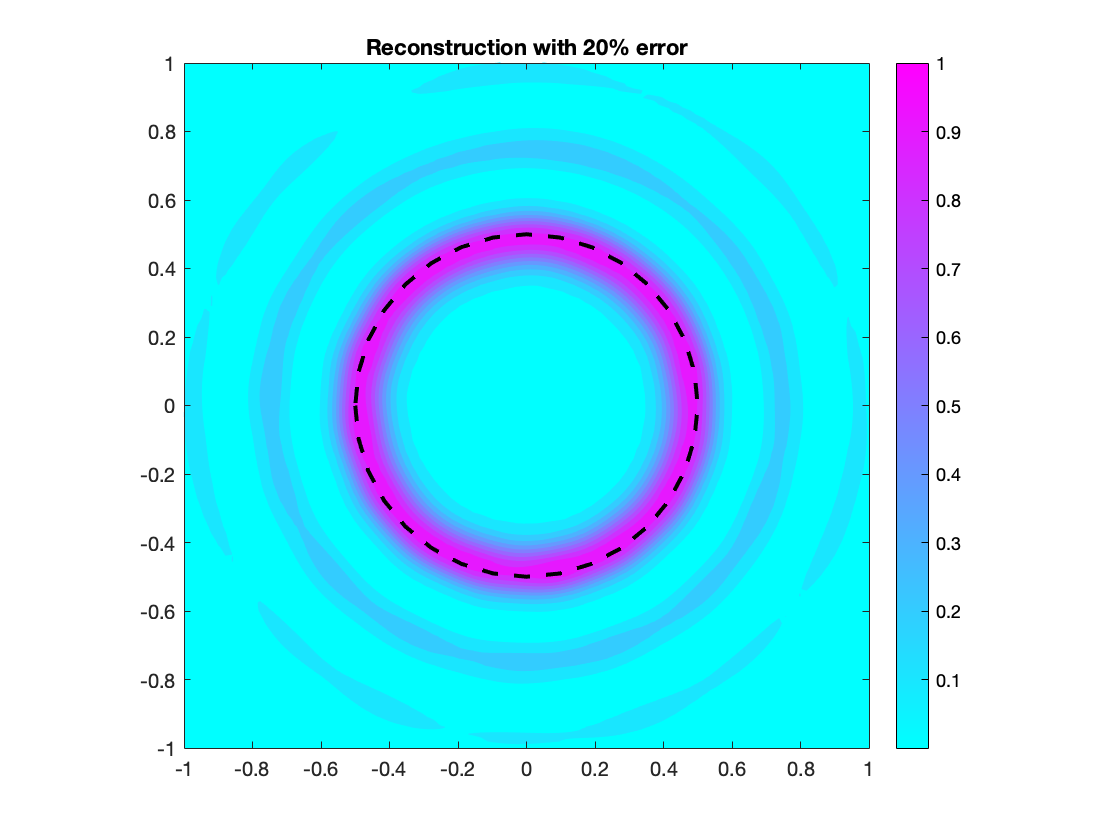}
\caption{Reconstruction of a circular region with radius $R=0.5$. On the left: $5\%$ error was added to the Cauchy data. On the right: $20\%$ error was added to the Cauchy data.
}
\label{fig1}
\end{figure}

\noindent
\textbf{Numerical Reconstruction of Small Regions:}\\
Here, we consider the case where the scatterer $D$ is a small volume region. More precisely,
$$
D = \bigcup_{j=1}^{J} D_j \quad \text{with} \quad D_j = (x_j + B(0,0.1)) \quad \text{such that} \quad \text{dist}(x_i , x_j) \geq c_0>0
$$
for $ i \neq j$, where $B(0,0.1)$ is the circle centered at the origin of radius 0.1. We also assume that the individual regions $D_j$ are disjoint.
Here we use the Lippmann-Schwinger integral equation to compute the measured data. By using Green's second identity just as in the previous section we have that the scattered field is given by 
$$u^s(x,\hat{y}) = \int_{D} k^2 \big( n(\omega)-1 \big) \Phi (x,w) u(\omega,\hat{y}) \, \text{d}\omega - \int_{\partial D} \Phi (x,\omega) \mathscr{B}\big(u(\omega,\hat{y})\big) \, \text{d}s(\omega).$$
Since the volume of $D$ is relatively small, we can approximate the scattered field as
$$u^s(x,\hat{y}) \approx \int_{D} k^2 \big( n(\omega)-1 \big) \Phi (x,\omega) u^i(\omega,\hat{y}) \, \text{d}\omega - \int_{\partial D} \Phi (x,\omega) \mathscr{B}\big(u^i(\omega,\hat{y})\big) \, \text{d}s(\omega). $$
Once again, $\Phi (x, w)$ is the free-space Green's function for the Helmholtz equation. The volume integral is computed via Gaussian quadratures, and the boundary integral uses the `integral' command in MATLAB. The normal derivative $\partial_r u^s(x,\hat{y})$ is similarly computed by taking the derivative of the above expression.\\

\noindent{\bf Example 2: } Here, we once again suppose that $\partial \Omega = B(0,1)$. In this example, we consider cases where $J=2$ and $J=3$ in Figure \ref{fig2}. For the case when $J=2$, we assume that the centers are $x_1 = (-0.5, -0.5)$ and $x_2 = (0.5, 0.5)$. For the case when $J=3$, we assume that the centers are $x_1 = (0.35, 0.65)$, $x_2 = (-0.6, 0.1)$, and $x_3 = (-0.2, -0.7)$. In either cases, we assume that the refractive index is $n = 1+2 \text{i}$, where the boundary parameters $\mu = 2.5 - 1.5 \text{i}$ and  $\gamma = 1.5 - 2\text{i}$. We take the wavenumber to be $k = 3 \pi$. For both cases, we plot the imaging functional with random noise level $\delta = 0.05$ which corresponds to $5 \%$ to simulated data $u^s$ and $\partial_{r} u^s$ on $\partial \Omega$. \\

\begin{figure}[h!]
\centering 
\includegraphics[scale=0.17]{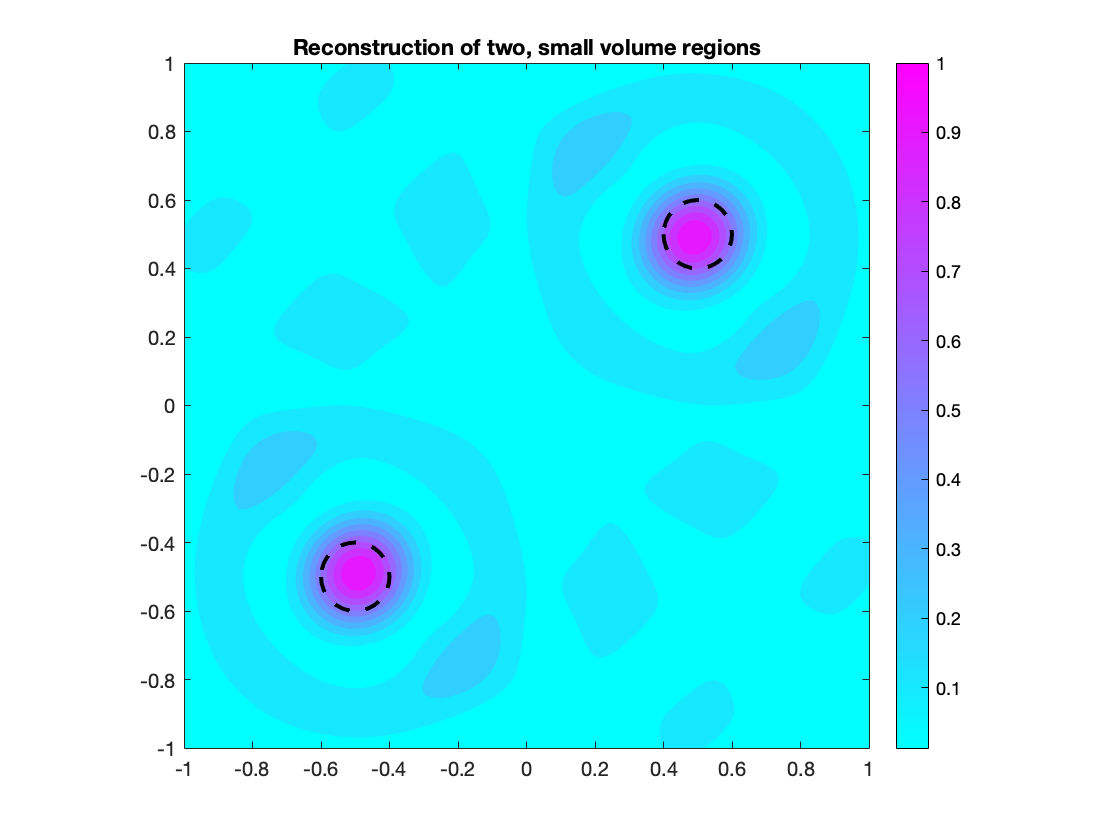}
\includegraphics[scale=0.17]{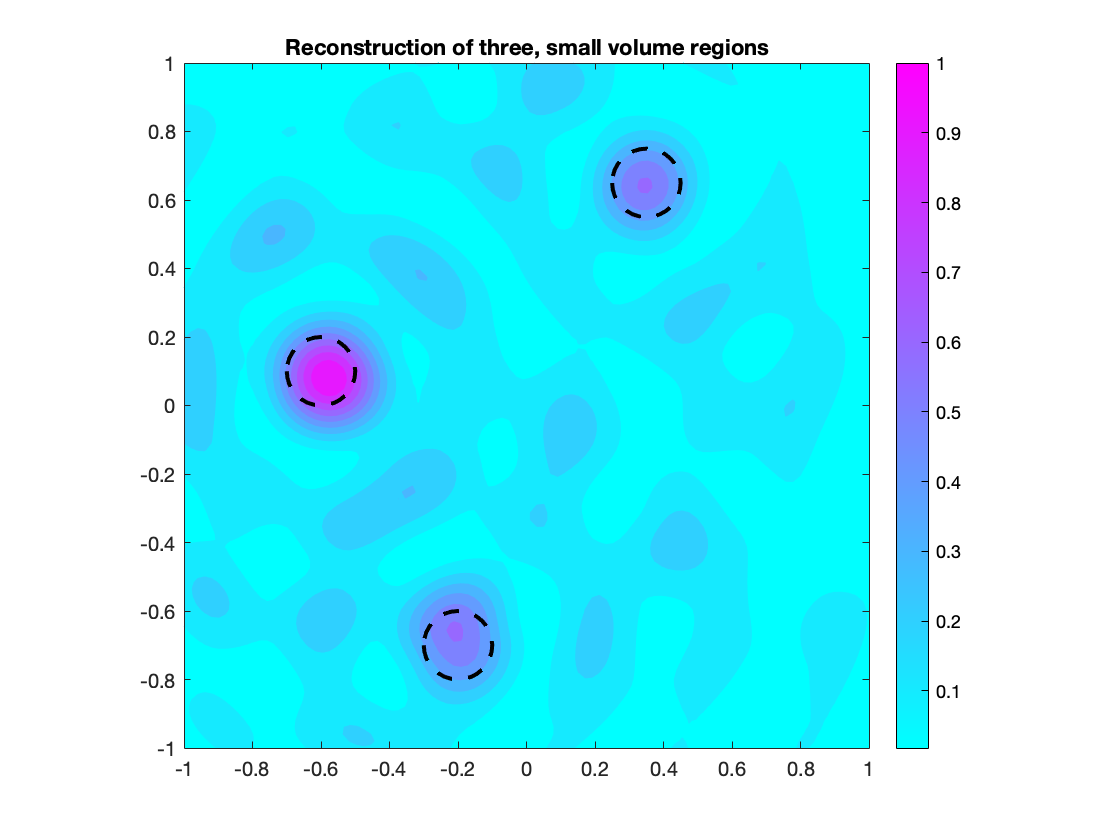}
\caption{Reconstruction of small circular regions. On the left: two circular regions with centers at $(-0.5, -0.5)$ and $(0.5, 0.5)$. On the right: three circular regions with centers at  $(0.35, 0.65)$, $(-0.6, 0.1)$, and $(-0.2, -0.7)$.}
\label{fig2}
\end{figure}

\noindent
\textbf{Numerical Reconstructions With Far Away Measurements:}\\
We consider the case when the measurements boundary $\partial \Omega$ is far away from the scatterer. Therefore, we will assume $D \subset [-1,1] \times [-1,1]$, where 
$$ \partial \Omega=10 (\text{cos}(\theta) , \text{sin}(\theta)) $$
i.e. $ \Omega = B(0,10)$. By the radiation condition, it is known that $\partial_{r}u^s  \approx \text{i}k u^s$ when the measurements boundary are far away from the scatterer. For this case, we use $W_{\text{far}}(z)$ as defined in \eqref{w_far}, instead of $W(z)$. Notice that this means that one only need to compute/measure $u^s$ on $\partial \Omega$. That is, we plot
$$ \widetilde{W}_{\text{nor}}(z) := \bigg ( \frac{W_{\text{far}}(z)}{\| W_{\text{far}}(z)\| _{\infty}} \bigg)^\rho ,$$ 
where again $\rho=4$ is used for increasing the resolution of the reconstructions. \\

\noindent
\textbf{Example 3:} Here we provide two examples in Figure \ref{fig3}. For the first example, we compute the data via separation of variables to recover $D = B(0,0.4)$. The refraction index $n=2$ with boundary parameters $\mu = 0.8$ and $\gamma = 1.1$. In the second example, we use the Lippmann-Schwinger integral equation to recover two circular regions of radius 0.1 centered at $(-0.4 , -0.4)$ and $(0.4 , 0.4)$. The refraction index is $n = 4+\text{i}$ where the boundary parameters are set to $\mu = 0.9 - 2\text{i}$ and $\gamma = 3-\text{i}$. For both plots, we add $5\%$ error to the synthetic data. 

\begin{figure}[h!]
\centering 
\includegraphics[scale=0.17]{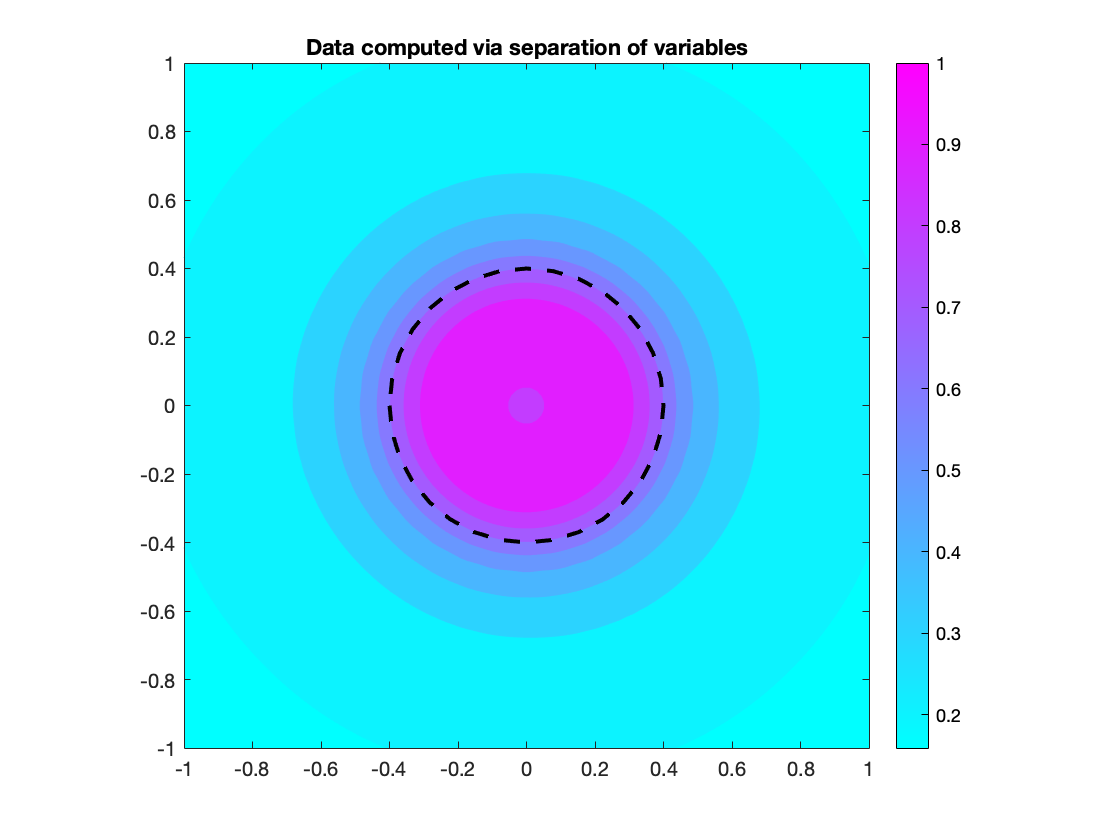}
\includegraphics[scale=0.17]{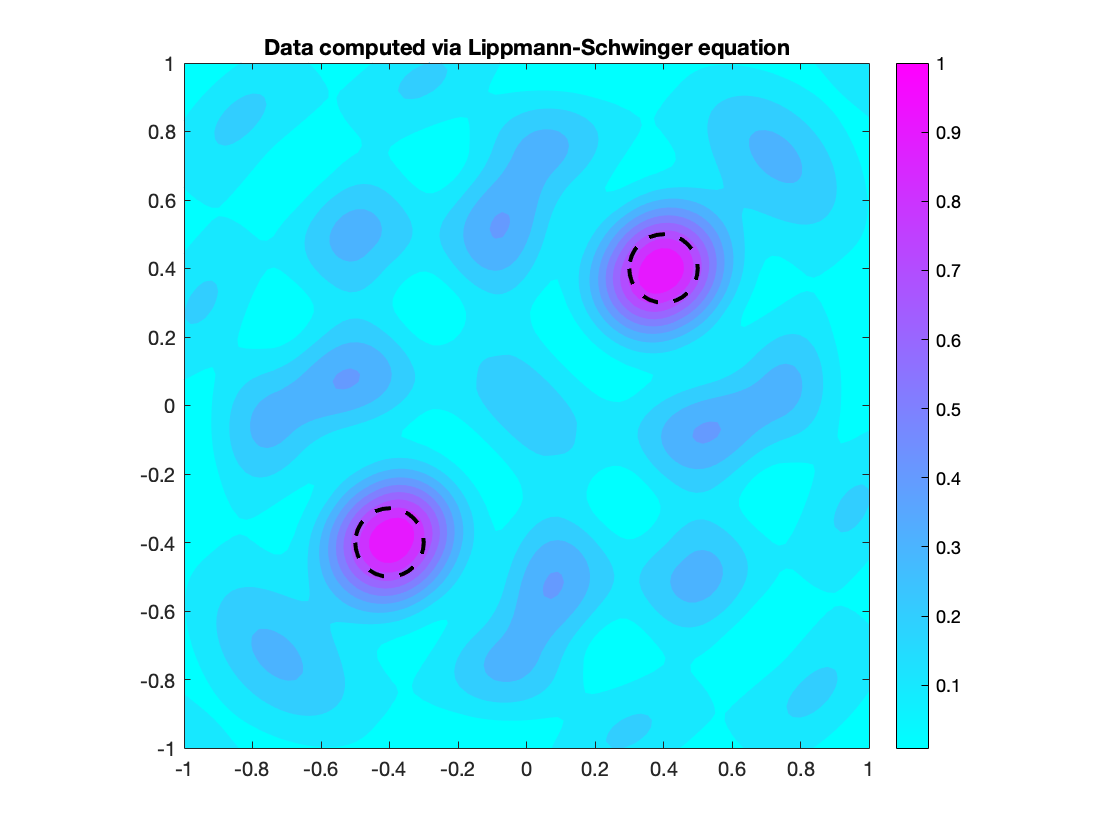}
\caption{Reconstructions where we approximate $\partial_{r}u^s \approx \text{i}k u^s$. Contour plots of $\widetilde{W}_{\text{nor}}(z)$ with $5\%$ error. On the left: data computed via separation of variables to recover $D = B(0,0.4)$ with $k=2\pi$. On the right: data computed via Lippmann-Schwinger equation to recover circular regions or radius 0.1 centered at  $(-0.4 , -0.4)$ and $(0.4 , 0.4)$ with $k=2.5\pi$.}
\label{fig3}
\end{figure}

{\color{black}Now, in order to provide more examples we will solve the direct scattering problem using a system of boundary integral equations(BIEs) to compute the synthetic Cauchy data. We can then proceed as described above to compute the imaging functional.} With this, we provide numerical reconstructions for the circle, elliptical, and kite-shape scatterer. For these examples, we fix the measurement boundary 
$$ \partial \Omega=3 (\text{cos}(\theta) , \text{sin}(\theta)) $$
with 64 equally spaced incident direction and observations. Furthermore, we also fix the wave number $k = 3\pi/2$, refractive index $n=5$, and boundary parameters $\gamma = 2$, $\mu = 1.5$.\\

\noindent
\textbf{Example 6:} For comparison, we begin by reconstructing a circular region, where the scatterer $D = B (0,0.5)$. We provide the cases where we add $5\%$ and $10\%$ error to the Cauchy data.

\begin{figure}[H]
\centering 
\includegraphics[scale=0.16]{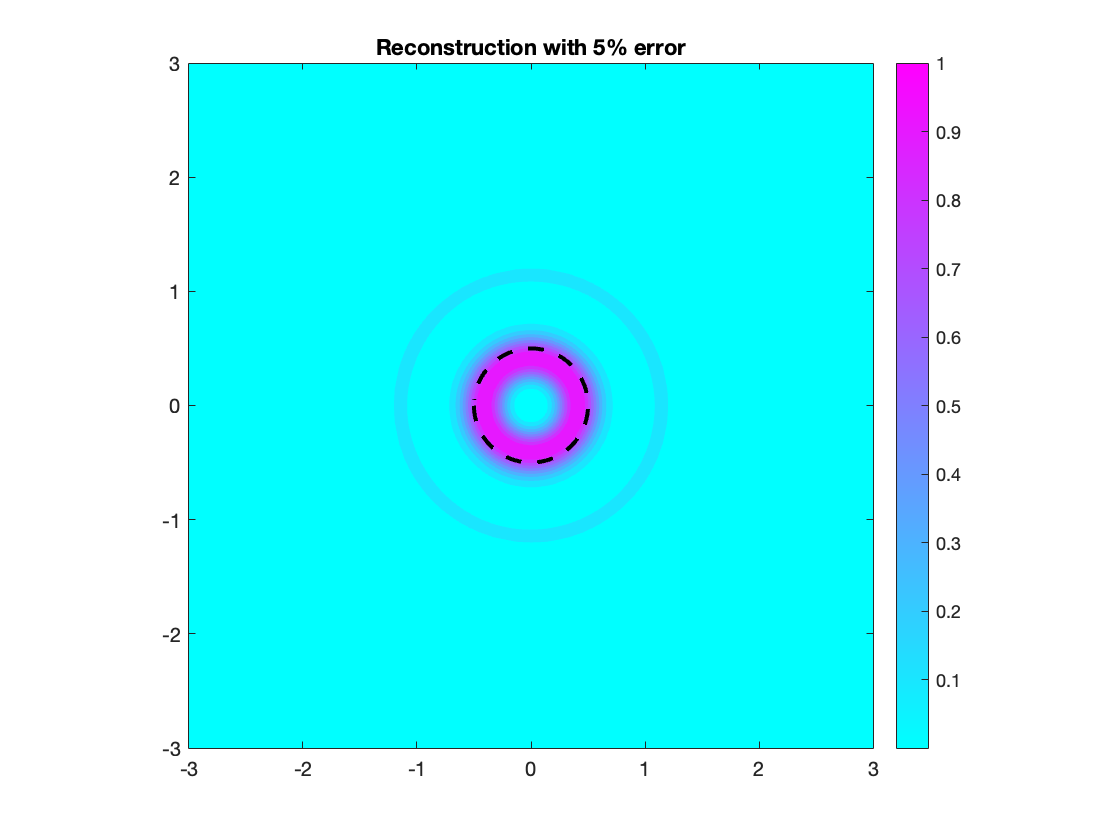}
\includegraphics[scale=0.16]{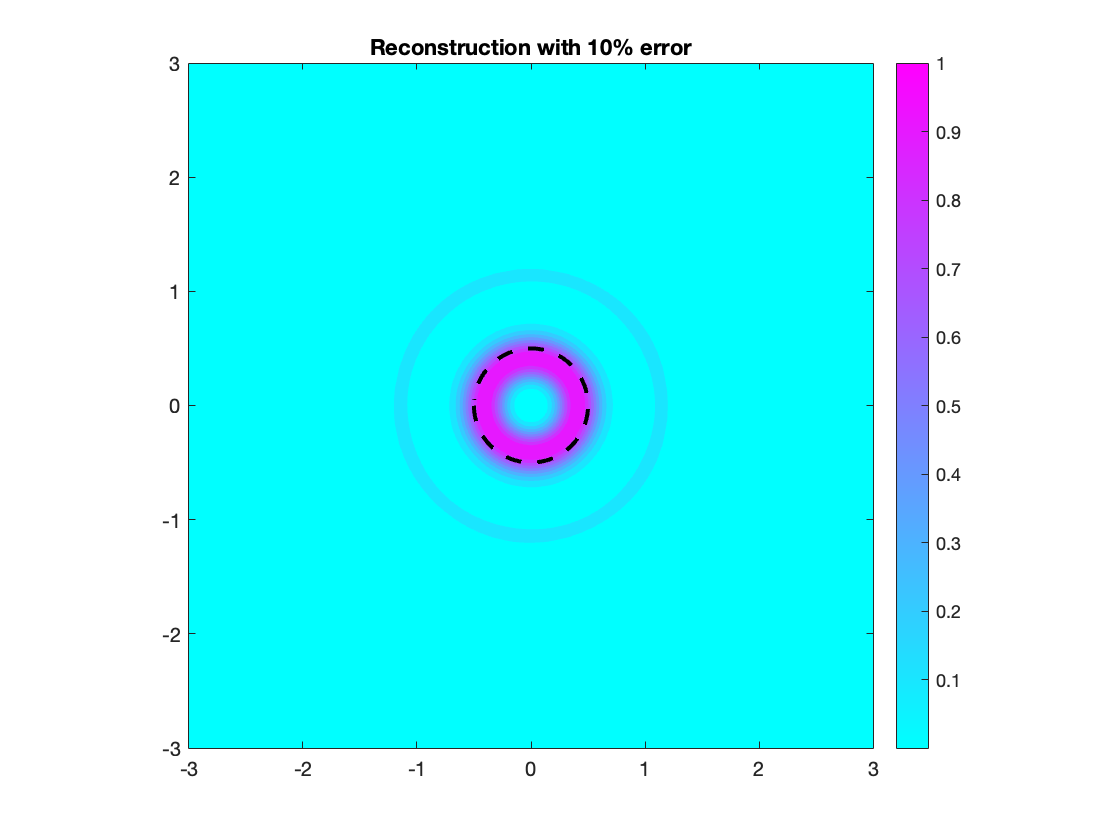}
\caption{Reconstruction of the circle shape where the Cauchy data was computed via BIEs. On the left: $5\%$ error was added to the data. On the right: $10 \%$ error was added to the data.}
\label{fig6}
\end{figure}

\noindent
\textbf{Example 7:} Here we recover an ellipse-shaped scatterer $D$, where the boundary is parameterized in polar coordinates as
$$\partial D = \big(\text{cos}(\theta) , 0.9\cdot\text{sin}(\theta) \big ), \quad \theta \in [0, 2\pi ).$$
Once again, we plot the cases where we add $5\%$ and $10\%$ error to the Cauchy data.\\

\begin{figure}[H]
\centering 
\includegraphics[scale=0.16]{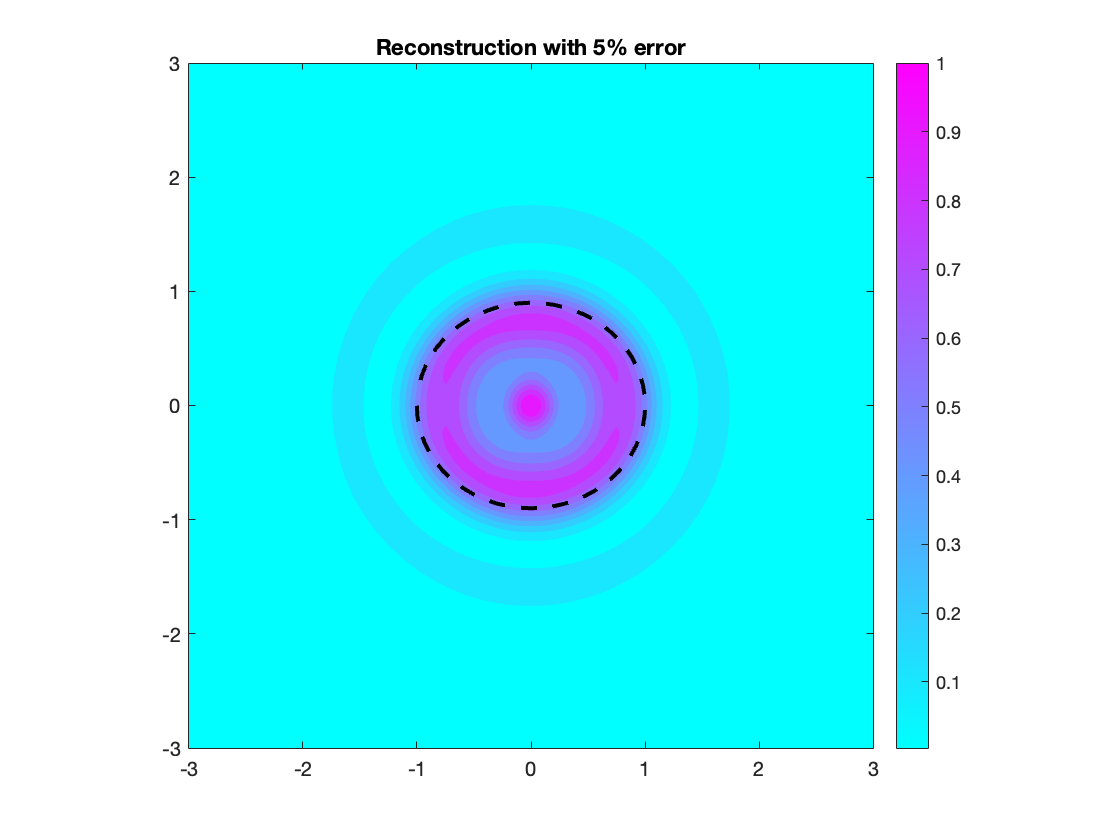}
\includegraphics[scale=0.16]{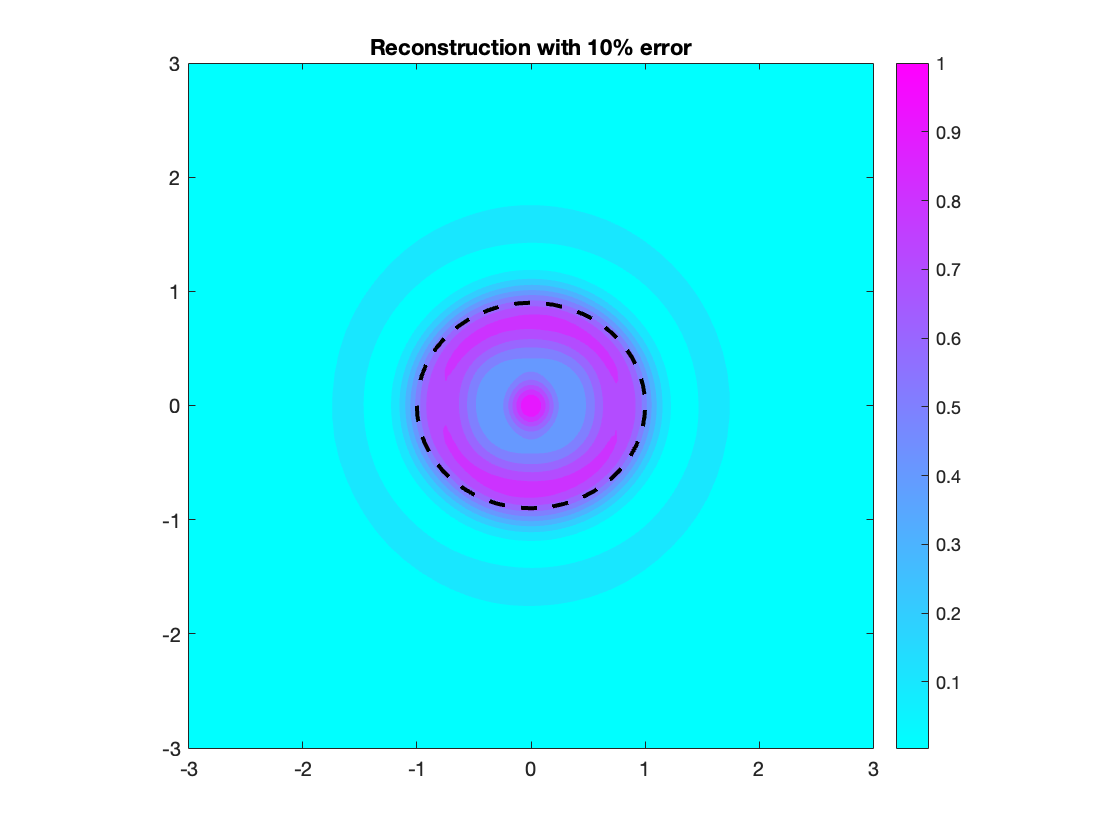}
\caption{Reconstruction of the ellipse shape where the Cauchy data was computed via BIEs. On the left: $5\%$ error was added to the data. On the right: $10 \%$ error was added to the data.}
\label{fig7}
\end{figure}

\noindent
\textbf{Example 8:}  We now provide a reconstruction of a kite-shaped scatterer $D$, where the boundary is parameterized in polar coordinates as
$$\partial D= \big(-1.5\cdot \text{sin}(\theta), \text{cos}(\theta)+0.65\cdot \text{cos}(2\theta)-0.65 \big), \quad \theta \in [0, 2\pi ).$$
As in our previous examples, we plot the cases where we add $5\%$ and $10\%$ error to the Cauchy data.
\begin{figure}[H]
\centering 
\includegraphics[scale=0.16]{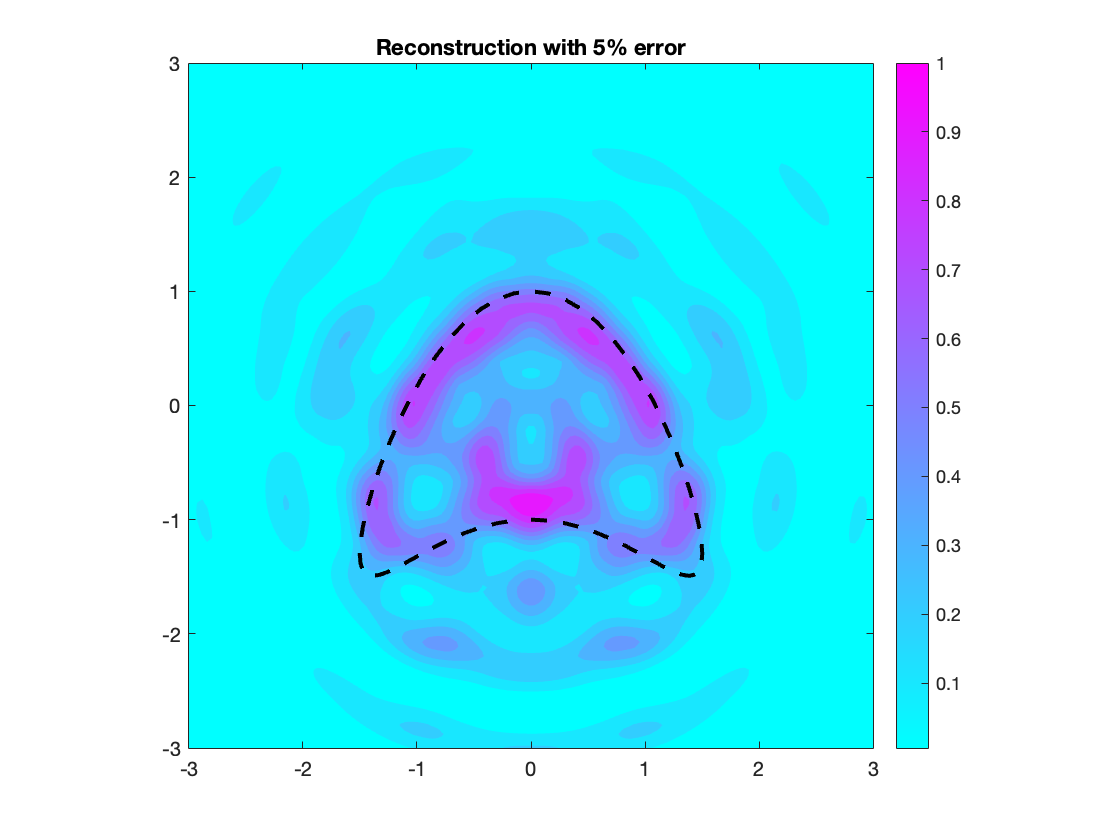}
\includegraphics[scale=0.16]{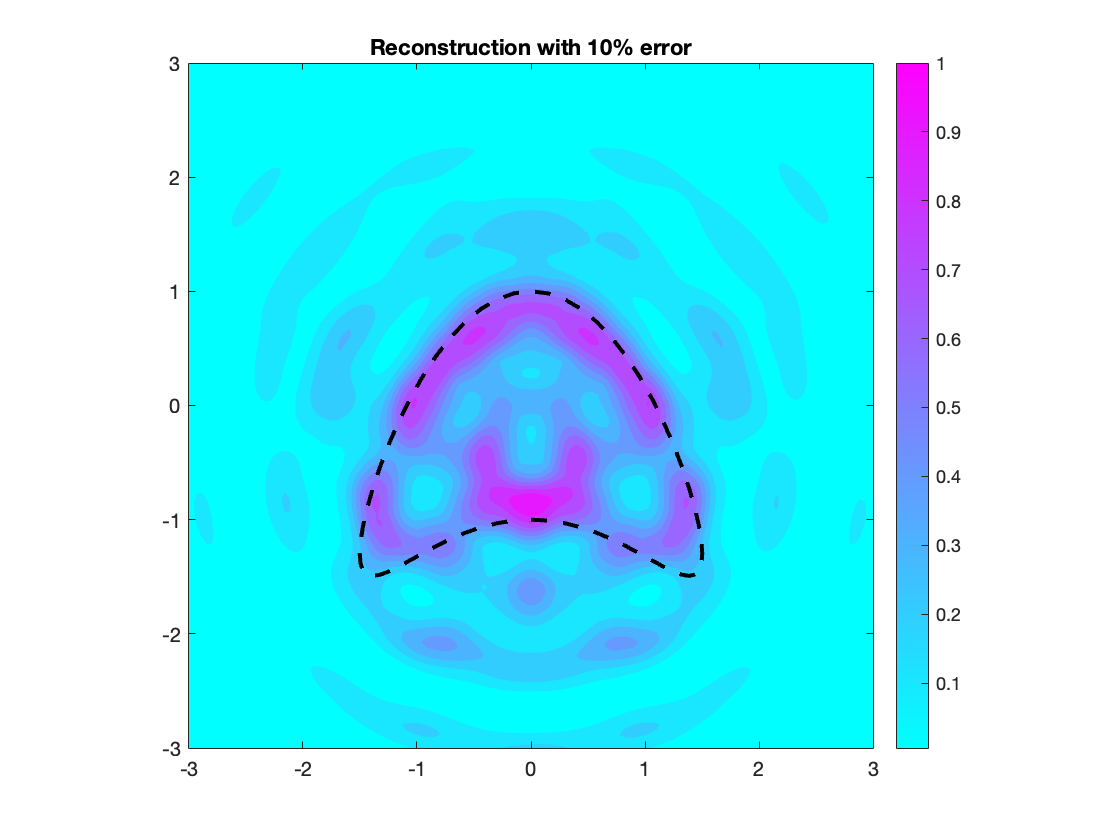}
\caption{Reconstruction of the kite shape where the Cauchy data was computed via BIEs. On the left: $5\%$ error was added to the data. On the right: $10 \%$ error was added to the data.}
\label{fig8}
\end{figure}

From our examples in this section, we provided reconstructions for circular and non-circular regions under sign assumptions on the refraction index \eqref{refraction-index} and boundary coefficients \eqref{gamma-bounds} and \eqref{mu-bounds}. Throughout our numerical examples, the imaging functional $W(z)$ demonstrated stability with respect to noisy data. Thus, our sampling method provides high fidelity reconstructions for isotropic scatterers with a delaminated boundary.

%%%%%%%%%%%%%%%%%%%%%%%%%%%%%%%%%%
\section{The Associated Transmission Eigenvalue Problem}\label{discrete-TEV}
In this section, we are interested in the associated transmission eigenvalue problem. This problem is derived by assuming that there is an incident field (solving the Helmholtz equation) that does not produce a scattered field outside the scatterer $D$ for  \eqref{bvp}--\eqref{2nd-bc}. {\color{black} Notice, that the analysis of our imaging functional in the previous section requires the measured Cauchy data. The transmission eigenvalue problem we wish to study here is derived when the measured Cauchy data$=(0,0)$. Therefore, we wish to study this case as to complete the analysis of our reconstruction method.}

We will prove that the set of transmission eigenvalues is at most discrete for the case where the refraction index Re$(n)>1$ in the scatterer $D$. In general, sampling methods to recover a scatterer are not valid when the wave number $k$ is a transmission eigenvalue (see e.g. \cite{guo}). We analyze the case where an incident field satisfying the Helmholtz equation in $\mathbb{R}^d$ does not produce a scattered field on the exterior of $D$. Therefore, we have for $w = u^s + u^i$ and $v = u^i$ in $\widetilde{H}^{1}(D)$ satisfies
\begin{align}
\Delta v + k^2 v = 0 \quad &\text{and} \quad \Delta w + k^2 n w = 0 \quad \text{in} \enspace D,\label{v-w-in-D}\\
w = v \quad &\text{and} \quad \partial_{\nu} v - \partial_{\nu}w= \mathscr{B}(w) \quad \text{on} \enspace \partial D. \label{v-w-on-bD}
\end{align}
Note we have used the assumption that $u^s = 0$ in $\R^d \setminus \overline{D}$. We now define 
$$\widetilde{H}^{1}(D) = \brac{\varphi \in H^{1}(D) \enspace \text{such that} \enspace \varphi \rvert_{\partial D} \in H^{1}(\partial D)}.$$ 
The values $k \in \C$ for which \eqref{v-w-in-D}--\eqref{v-w-on-bD} has a non-trivial solution $(w,v) \in  \widetilde{H}^{1}(D) \times \widetilde{H}^{1}(D)$ are called the transmission eigenvalues (TEs). 

%%%%%%%%%%%%%%%%%%%%%%%%%%%%%%%%%%%
We begin by considering the case where Re$(n)>1$. Multiplying the second equation by a test function $\phi \in \widetilde{H}^{1}(D)$ and by applying Green's 1st Theorem, we have that
$$ 0 = \int_{D} \nabla w \cdot \nabla \overline{\phi} - k^2 n w \overline{\phi} \, \text{d}x - \int_{\partial D} \overline{\phi} \partial_{\nu} w \, \text{d}s .
$$

\noindent
By the boundary condition \eqref{v-w-on-bD}, we get that
$$
0 = \int_{D} \nabla w \cdot \nabla \overline{\phi} - k^2 n w \overline{\phi} \, \text{d}x - \int_{\partial D} \overline{\phi} \big( T_k w - \mathscr{B}(w) \big)  \, \text{d}s .
$$
Here the interior DtN map for the Helmholtz equation is given by
$$T_{k} : H^{1/2}(\partial D) \rightarrow  H^{-1/2}(\partial D) \quad \text{defined by} \quad T_{k} f = \partial_{\nu}v$$
where
$$\Delta v + k^2 v = 0 \enspace \text{in} \enspace D \quad \text{and} \quad v = f \enspace \text{on} \enspace \partial D.$$
Note that $T_k$ is a well--defined bounded linear operator whenever $k^2$ is not a Dirichlet eigenvalue of $-\Delta$ in $D$. Also, $T_k$ depends analytically on $k$ for all values for which it is well--defined (see for e.g. \cite{Hsiao}). This now inspires us to define the bounded sesquilinear form $a_k (\cdot , \cdot) : \widetilde{H}^{1}(D) \times \widetilde{H}^{1}(D) \rightarrow \mathbb{C}$ as
\begin{equation} \label{a-k}
a_k (w, \phi ) := \int_{D} \nabla w \cdot \nabla \overline{\phi} - k^2 n w \overline{\phi} \, \text{d}x - \int_{\partial D} \overline{\phi} \big( T_k w - \mathscr{B}(w) \big) \, \text{d}s .
\end{equation}
Thus, $k$ is a TE of \eqref{v-w-in-D}--\eqref{v-w-on-bD} if and only if $a_k (w , \phi) = 0$ has a non-trivial solution $w \in \widetilde{H}^{1}(D)$ for all $\phi \in \widetilde{H}^{1}(D)$, provided that $k^2$ is not a Dirichlet eigenvalue of $- \Delta$ in $D$. We denote these Dirichlet eigenvalues as $\brac{\lambda_{j}(D)}_{j=1}^{\infty}$. 

We take a similar approach and analysis to \cite{Hughes} in the sense that we will prove discreteness of the TE by appealing to the Analytic Fredholm Theorem. We will show that the sesquilinear form $a_k (\cdot , \cdot)$ associated with the TE problem is represented by a Fredholm operator with index zero that depends on the wave number $k^2 \in \C \setminus \brac{ {\lambda_{j}(D)}}_{j=1}^{\infty}$ analytically. Consider the the following decomposition $a_k(\cdot , \cdot ) = b(\cdot , \cdot ) + c_k (\cdot , \cdot )$ where we define
\begin{equation}\label{b-ses}
b(w , \phi ) = \int_{D} \nabla w \cdot \nabla \overline{\phi} +  w \overline{\phi} \, \text{d}x - \int_{\partial D} (T_0 w - \mathscr{B}(w)) \overline{\phi} \, \text{d}s. 
\end{equation}
Here $T_0$ is the interior DtN mapping with the wave number $k = 0$ and we let
\begin{equation}\label{c-ses}
c_k (w , \phi) := - \int_{D} (k^2n +1) w \overline{\phi} \, \text{d}x - \int_{\partial D}  \overline{\phi}(T_k - T_0)w \, \text{d}s .
\end{equation}
The following lemmas regarding analytical properties of $a_k$, $b$, and $c_k$ will ultimately allow us to invoke the Analytic Fredholm Theorem.

\begin{lemma}
Let $b(\cdot , \cdot)$ and $c_k (\cdot , \cdot)$ be defined as in \eqref{b-ses} and \eqref{c-ses}, respectively. Then, $b(\cdot , \cdot)$ is coercive on $\widetilde{H}^{1}(D)$ and $c_k (\cdot , \cdot)$ is compact and analytic with respect to the wavenumber $k$ provided that $k^2 \in \mathbb{C} \setminus \brac{\lambda_j (D)}_{j=1}^{\infty}$.
\end{lemma}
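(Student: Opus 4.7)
The plan is to treat coercivity of $b$ and the compactness/analyticity of $c_k$ separately, exploiting the sign assumptions \eqref{gamma-bounds}--\eqref{mu-bounds}, tangential integration by parts on the closed manifold $\partial D$, and the smoothing nature of the DtN difference $T_k-T_0$.

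For coercivity, I would test with $\phi=w$ and use the fact that $\partial D$ has no boundary to rewrite
\[\int_{\partial D}\mathscr{B}(w)\,\overline{w}\,\text{d}s = \int_{\partial D}\mu\nabla_{\partial D}w\cdot\nabla_{\partial D}\overline{w} + \gamma|w|^2\,\text{d}s,\]
whose real part is bounded below by $M\|w\|^2_{H^1(\partial D)}$ with $M:=\min(\mu_{\text{min}},\gamma_{\text{min}})>0$. To offset the indefinite term $-\text{Re}\!\int_{\partial D}T_0 w\,\overline{w}\,\text{d}s$, I introduce the harmonic extension $v\in H^1(D)$ of $w|_{\partial D}$, so that $\Delta v=0$, $v|_{\partial D}=w|_{\partial D}$, and $T_0 w=\partial_\nu v$. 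Since $w-v\in H^1_0(D)$, Green's identity with $\Delta v=0$ gives $\int_D\nabla v\cdot\nabla\overline{(w-v)}\,\text{d}x=0$, hence
\[\|\nabla w\|^2_{L^2(D)} = \|\nabla(w-v)\|^2_{L^2(D)} + \|\nabla v\|^2_{L^2(D)} \quad\text{and}\quad \int_{\partial D}T_0 w\,\overline{w}\,\text{d}s = \|\nabla v\|^2_{L^2(D)}.\]
Subtracting yields the non-negative residue $\|\nabla w\|^2_{L^2(D)}-\int_{\partial D}T_0 w\,\overline{w}\,\text{d}s=\|\nabla(w-v)\|^2_{L^2(D)}\geq 0$, and so
\[\text{Re}\,b(w,w) \geq \|\nabla(w-v)\|^2_{L^2(D)} + \|w\|^2_{L^2(D)} + M\|w\|^2_{H^1(\partial D)}.\]
Using the continuity of the harmonic extension, $\|\nabla v\|^2_{L^2(D)}\leq C\|w\|^2_{H^{1/2}(\partial D)}\leq C\|w\|^2_{H^1(\partial D)}$, I would then estimate $\|w\|^2_{H^1(D)}\leq\|\nabla(w-v)\|^2_{L^2(D)}+\|w\|^2_{L^2(D)}+C\|w\|^2_{H^1(\partial D)}$, from which $\|w\|^2_{\widetilde{H}^1(D)}\leq\max\{1,(C+1)/M\}\,\text{Re}\,b(w,w)$ and coercivity follows with constant $\alpha=1/\max\{1,(C+1)/M\}>0$.

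For $c_k$, the volume part $(w,\phi)\mapsto-\int_D(k^2 n+1)w\,\overline{\phi}\,\text{d}x$ factors through the compact embedding $\widetilde{H}^1(D)\hookrightarrow L^2(D)$ (Rellich--Kondrachov) composed with multiplication by $k^2 n+1\in L^\infty(D)$, so its Riesz representative on $\widetilde{H}^1(D)$ is compact. For the boundary part, let $v_k,v_0$ be the Dirichlet extensions with data $f\in H^{1/2}(\partial D)$. Then $u:=v_k-v_0$ satisfies $(\Delta+k^2)u=-k^2 v_0$ in $D$ with $u|_{\partial D}=0$ and $v_0\in H^1(D)\subset L^2(D)$, so elliptic regularity (on the $C^{4,\alpha}$ boundary of Section \ref{bie}) gives $u\in H^2(D)$ and $\partial_\nu u\in H^{1/2}(\partial D)$. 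Thus $T_k-T_0:H^{1/2}(\partial D)\to H^{1/2}(\partial D)$ is bounded, and composition with the compact embedding $H^{1/2}(\partial D)\hookrightarrow H^{-1/2}(\partial D)$ (Rellich on the compact manifold $\partial D$) yields compactness of $T_k-T_0$, and hence of the corresponding Riesz representative. Analyticity in $k$ on $\mathbb{C}\setminus\{k:k^2\in\{\lambda_j(D)\}_{j=1}^\infty\}$ is inherited from the analyticity of the Dirichlet resolvent $(-\Delta-k^2)^{-1}$ (through which $T_k$ is defined) and the polynomial dependence of the volume term on $k^2$.

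The main obstacle is the wrong sign of the interior DtN contribution in $b$: the term $-\text{Re}\int_{\partial D}T_0 w\,\overline{w}\,\text{d}s$ is non-positive and strictly negative whenever the harmonic part of $w$ does not vanish, so coercivity cannot be extracted from the Dirichlet integral alone. The crucial cancellation is the harmonic-extension identity $\|\nabla w\|^2_{L^2(D)}-\int_{\partial D}T_0 w\,\overline{w}\,\text{d}s=\|\nabla(w-v)\|^2_{L^2(D)}\geq 0$, which leaves a non-negative residue; the Laplace--Beltrami contribution on $\partial D$ then absorbs the remaining harmonic energy $\|\nabla v\|^2_{L^2(D)}\lesssim\|w\|^2_{H^1(\partial D)}$ with no smallness restriction on the boundary parameters.
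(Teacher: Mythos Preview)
Your proof is correct and rests on the same key identity as the paper's: introducing the harmonic extension $v$ of $w|_{\partial D}$ so that $\int_{\partial D}\overline{w}\,T_0 w\,\mathrm{d}s=\|\nabla v\|^2_{L^2(D)}$, and then controlling $\|\nabla v\|^2_{L^2(D)}$ by the $H^1(\partial D)$ boundary norm. The difference is purely in packaging. The paper uses Cauchy--Schwarz to get $\int_{\partial D}\overline{w}\,T_0 w\,\mathrm{d}s\leq\|\nabla w\|^2_{L^2(D)}$, which yields $|b(w,w)|\geq\|w\|^2_{L^2(D)}+M\|w\|^2_{H^1(\partial D)}$ but loses the gradient term entirely; it then recovers $\|\nabla w\|^2_{L^2(D)}$ via a contradiction argument on a normalized sequence. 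Your orthogonal decomposition $\|\nabla w\|^2_{L^2(D)}=\|\nabla(w-v)\|^2_{L^2(D)}+\|\nabla v\|^2_{L^2(D)}$ keeps the non-harmonic residue explicitly and lets you close the estimate directly with an explicit coercivity constant, avoiding contradiction. Both routes use exactly the same analytic ingredients; yours is slightly more transparent. For $c_k$, the paper simply cites compactness of $T_k-T_0$ as known, while you sketch the elliptic-regularity argument (which, as you note, needs more boundary smoothness than the Lipschitz assumption of Section~\ref{dp-ip}; for Lipschitz $\partial D$ one would instead factor through the compact embedding $H^1(D)\hookrightarrow L^2(D)$ in the resolvent step). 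Analyticity is handled identically.
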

\begin{proof}
    We begin by showing a useful estimate for the boundary integral. By Green's 1st Theorem and the fact that $v=w$ on $\partial D$, we have that
    $$I:= \int_{\partial D} \overline{w}T_0 w \, \text{d}s = \int_{\partial D} \overline{v} \partial_{\nu} v \, \text{d}s = \int_{D} |\nabla v|^2 \, \text{d}s = \| \nabla v\|^{2}_{L^{2}(\Omega)}.$$
    However, it also holds that
    $$I = \int_{\partial D} \overline{w} \partial_{\nu}v \, \text{d}s = \int_{D} \nabla v \cdot \nabla \overline{w} \, \text{d}x \leq  \| \nabla v\|_{L^{2}(D)} \| \nabla w\|_{L^{2}(D)} = \sqrt{I} \| \nabla w\|_{L^{2}(D)}. $$
Thus, we have the estimate $I \leq \| \nabla w\|^{2}_{L^{2}(D)}$. 

Now, suppose by contradiction that $b(\cdot , \cdot)$ is not coercive on $ \widetilde{H}^{1}(D)$. Thus, there exists a sequence $\{ w_m\}_{m \in \mathbb{N}}$ where $|b (w_m , w_m)|\rightarrow 0$ as $m \rightarrow \infty$ such that $\| w_m\|_{\widetilde{H}^{1}(D)} = 1$ for all $m \in \mathbb{N}$. By our above estimate, 
    $$|b(w_m , w_m)| \geq  \|w_m\|^{2}_{L^{2}(D)} + \min\{ \gamma_{\text{min}} , \mu_{\text{min}}\} \| w_m \|^{2}_{H^{1}(\partial D)}.$$
     Since the sequence $\{ w_m\}_{m \in \mathbb{N}}$ is bounded, then $w_m \rightharpoonup w$ as $m \rightarrow \infty$, i.e. weakly converges, in $\widetilde{H}^{1}(D)$. Since $|b (w_m , w_m)| \rightarrow 0$ as $m \rightarrow \infty$, then
    $$w_m \rightarrow 0 \quad \text{in} \enspace L^2(D) \qquad \text{and} \qquad w_m \big \rvert_{\partial D}\rightarrow 0 \quad \text{in} \enspace H^{1}(\partial D).$$
    Furthermore, let $\gamma_{\text{max}} = \| \gamma\|_{L^\infty (\partial D)}$ and $\mu_{\text{max}}= \sup\limits_{|\xi|=1}\| \overline{\xi} \cdot \mu  \xi \|_{L^\infty (\partial D)}$ then we have 
    \begin{align*}
        \| \nabla w_m\|^{2}_{L^{2}(D)} &= b (w_m , w_m) - \int_{D} |w_m|^2 + \int_{\partial D} \overline{w}_m T_0 w_m \, \text{d}s \\
         & \hspace{0.8in}  - \int_{\partial D} \mu |\nabla_{\partial D} w_m|^2 + \gamma  |w_m|^2 \, \text{d}s \\
        &\leq |b (w_m , w_m)| +  \| w_m\|^{2}_{L^{2}(D)} +   \| w_m\|_{H^{1/2}( \partial D)} \| T_0 w_m\|_{H^{-1/2}( \partial D)} \\ 
        &\hspace{.8in} + \max\{ \mu_{\text{max}} , \gamma_{\text{max}}\}  \| w_m\|^{2}_{H^{1}(\partial D)} \\
        &\leq b_k (w_m , w_m) + \| w_m\|^{2}_{L^{2}(D)} + C  \| w_m\|^{2}_{H^{1}(\partial D)}
    \end{align*}
    where the last inequality is obtained from the fact that $T_0$ is bounded and the continuous embedding of $H^1(\partial D)$ into $H^{1/2}( \partial D)$. Thus, $w_m \rightarrow 0$ as $m \rightarrow \infty$ in $\widetilde{H}^{1}(D)$ which contradicts the unit norm assumption on the sequence. Thus, $b(w, \phi)$ is indeed coercive on $\widetilde{H}^{1}(D)$. Since $b(\cdot , \cdot)$ is  bounded, there exists an invertible operator $B : \widetilde{H}^{1}(D) \rightarrow \widetilde{H}^{1}(D)$ such that 
    $$b(w , \phi) = (B w,\phi)_{\widetilde{H}^{1}(D)} \quad \text{ for all} \quad w,\phi \in \widetilde{H}^{1}(D).$$
    
    Furthermore, the compactness of $c_k(\cdot , \cdot)$ follows from the fact that $T_k - T_0$ is compact and the compact embedding of $H^{1}(D)$ into $L^{2}(D)$. Since $c_k(\cdot , \cdot)$ is also bounded, there exists compact operator $C_k :\widetilde{H}^{1}(D) \rightarrow \widetilde{H}^{1}(D)$ where 
   $$c_{k}(w , \phi) = (C_k w, \phi )_{\widetilde{H}^{1}(D)}  \quad \text{ for all} \quad w,\phi \in \widetilde{H}^{1}(D).$$  
The operator $T_k$ is analytic provided that $k^2$ is not a Dirichlet eigenvalue of $- \Delta$. Thus, $C_k$ is analytic with respect to $\displaystyle{k^2 \in \mathbb{C} \setminus \{ \lambda_j (D)\}^{\infty}_{j=1}}$. This proves the claim, since $B$ is invertible along with  $C_k$ being compact and analytic with respect to $k^2 \in \mathbb{C} \setminus \{ \lambda_j (D)\}^{\infty}_{j=1}$.
\end{proof}

Now, in order to prove discreteness of the TEs, we must show that there exists some value of $k$ such that $k^2 \in \mathbb{C} \setminus \{ \lambda_j (D)\}^{\infty}_{j=1}$ that is not a TE. The next result gives the discreteness for this TE problem. 

\begin{theorem}
    If the refraction index $\mathrm{Re}(n)>1$, then the set of transmission eigenvalues satisfying \eqref{v-w-in-D}--\eqref{v-w-on-bD} is at most discrete.
\end{theorem}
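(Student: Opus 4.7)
The plan is to cast the transmission eigenvalue problem as a zero of the analytic operator-valued family $B+C_k$ furnished by the preceding lemma, and then invoke the Analytic Fredholm Theorem. Since $B$ is boundedly invertible and $C_k$ is compact and analytic with respect to $k^2$ on the connected open set $\Sigma := \mathbb{C}\setminus\{\lambda_j(D)\}_{j=1}^{\infty}$, that theorem dichotomizes the exceptional set $\{k^2\in\Sigma : B+C_k \text{ is not invertible}\}$ into either all of $\Sigma$ or a discrete subset of $\Sigma$. Because $\{\lambda_j(D)\}$ is itself discrete, ruling out the first alternative yields an at most discrete set of transmission eigenvalues in $\mathbb{C}$.

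To rule out the first alternative, it is enough to exhibit one $k_0$ with $k_0^2\in\Sigma$ that is not a transmission eigenvalue. The natural choice is $k_0=\text{i}\tau$ for some $\tau>0$, so $k_0^2=-\tau^2<0$ automatically lies in $\Sigma$ (all Dirichlet eigenvalues of $-\Delta$ on $D$ being positive). The problem then reduces to showing that the only pair $(w,v)\in\widetilde{H}^1(D)\times\widetilde{H}^1(D)$ solving \eqref{v-w-in-D}--\eqref{v-w-on-bD} at $k=\text{i}\tau$ is the trivial one.

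For the uniqueness step, I would test $\Delta w+k_0^2 nw=0$ against $\overline{w}$ and $\Delta v+k_0^2 v=0$ against $\overline{v}$ via Green's first identity on $D$, then subtract and use $v=w$ together with the jump relation $\partial_\nu v-\partial_\nu w=\mathscr{B}(w)$ on $\partial D$ to obtain
\[ \int_D\left(|\nabla w|^2-|\nabla v|^2\right)\text{d}x+\tau^2\int_D\left(n|w|^2-|v|^2\right)\text{d}x=-\int_{\partial D}\overline{w}\,\mathscr{B}(w)\,\text{d}s. \]
Integrating by parts tangentially on $\partial D$ and taking real parts, assumptions \eqref{gamma-bounds}--\eqref{mu-bounds} bound the right-hand side above by $-\mu_{\min}\|\nabla_{\partial D}w\|_{L^2(\partial D)}^2-\gamma_{\min}\|w\|_{L^2(\partial D)}^2\leq 0$. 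On the left, since $v$ is the unique minimizer of the coercive energy $\phi\mapsto\int_D|\nabla\phi|^2+\tau^2|\phi|^2\,\text{d}x$ among $\phi\in H^1(D)$ with $\phi|_{\partial D}=w|_{\partial D}$, the combination $\int_D(|\nabla w|^2-|\nabla v|^2)+\tau^2(|w|^2-|v|^2)\,\text{d}x$ is non-negative, so the left-hand side dominates $\tau^2\int_D(\text{Re}(n)-1)|w|^2\,\text{d}x\geq 0$. Equality of the two sides then forces each non-negative contribution to vanish; the hypothesis $\text{Re}(n)>1$ yields $w\equiv 0$ in $D$, and $v\equiv 0$ follows at once from the unique solvability of the Dirichlet problem for $-\Delta+\tau^2 I$.

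The hardest part, I expect, is precisely this uniqueness argument: the variational comparison between $w$ and its auxiliary extension $v$ relies both on the choice $k_0^2<0$ (which makes the auxiliary quadratic form coercive and $v$ its unique minimizer) and on the strict inequality $\text{Re}(n)>1$ (which is what converts the integral identity into the pointwise vanishing of $w$). Once uniqueness at this single $k_0$ is established, an application of the Analytic Fredholm Theorem to $B+C_k$ on $\Sigma$ finishes the proof.
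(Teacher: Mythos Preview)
Your proposal is correct and follows essentially the same route as the paper: both apply the Analytic Fredholm Theorem to $B+C_k$ on $\mathbb{C}\setminus\{\lambda_j(D)\}$ after verifying that $k=\mathrm{i}$ (or $k=\mathrm{i}\tau$) is not a transmission eigenvalue via an energy inequality exploiting $\mathrm{Re}(n)>1$. Your variational-minimality phrasing of the key inequality $\int_D|\nabla w|^2+\tau^2|w|^2\,\mathrm{d}x\geq\int_D|\nabla v|^2+\tau^2|v|^2\,\mathrm{d}x$ is exactly equivalent to the paper's Cauchy--Schwarz bound on the DtN pairing $\int_{\partial D}\overline{w}\,T_{\mathrm{i}}w\,\mathrm{d}s$, and your conclusion $v=0$ from Dirichlet uniqueness is in fact a bit cleaner than the paper's appeal to Holmgren's theorem.
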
 
\begin{proof}
    Let $(v,w) \in \widetilde{H}^{1}(D) \times \widetilde{H}^{1}(D)$ satisfy \eqref{v-w-in-D}--\eqref{v-w-on-bD}. We first show an estimate of the pairing between $w$ and $T_\text{i} w$. By Green's 1st theorem,
    $$I:= \int_{\partial D} \overline{w} T_\text{i} w \, \text{d}s = \int_{\partial D} \overline{v} \partial_{\nu} v \, \text{d}s = \int_{D} |\nabla v|^2 + |v|^2 \, \text{d}x = \|v \|^{2}_{H^{1}(D)}.$$
    It also holds by Cauchy-Schwartz that
    $$I = \int_{\partial D} \overline{w} \partial_{\nu} v \, \text{d}s = \int_{D} \nabla v \cdot \nabla \overline{w} + v \overline{w} \, \text{d}x \leq \| v\|_{H^{1}(D)} \| w\|_{H^{1}(D)} = \sqrt{I} \| w\|_{H^{1}(D)}.$$
    Thus, we have the estimate $I \leq \| w\|^{2}_{H^{1}(D)}$. We now show that $k=\text{i}$ is not a TE. To this end, assume that $(v,w)$ satisfy \eqref{v-w-in-D}--\eqref{v-w-on-bD} for $k=\text{i}$. By taking the real part of \eqref{a-k} for $\phi =w$ and our aforementioned estimate, we have that
    \begin{align*}
        0 &= \mathrm{Re}\int_{D} |\nabla w |^2 + n |w|^2\, \text{d}x - \mathrm{Re}\int_{\partial D} \big( T_\text{i} w - \mathscr{B}(w) \big) \overline{w} \, \text{d}s \\
        &\geq \int_{D} \left(\mathrm{Re}(n)-1 \right) |w|^2 \, \text{d}x + \int_{\partial D} \mathrm{Re}(\mu) |\nabla_{\partial D} w| + \mathrm{Re}(\gamma) |w|^2 \, \text{d}s.
    \end{align*}
   By the fact that $\mathrm{Re}(n)>1$ we have that $w=0$ in $D$ as well as $w = \partial_{\nu} w=0$ on $\partial D$. By \eqref{v-w-in-D}--\eqref{v-w-on-bD}, we have that $v$ satisfies
    $$(\Delta - 1) v = 0 \enspace \text{in} \enspace D \quad \text{and} \quad v=\partial_{\nu} v= 0 \enspace \text{on} \enspace \partial D.$$
By Holmgren's Theorem, we also have that $v=0$ in $D$. Therefore, if $k=\text{i}$, we have that $(w,v)=(0,0)$ and are not eigenfunctions. Thus, when $\mathrm{Re}(n)>1$, $k=\text{i}$ is not a TE. By the Analytic Fredholm Theorem, the set of TE is at most discrete.
\end{proof}

The analysis in the previous case will not work for the case when $0<\mathrm{Re}(n)<1$. This is because, just as in \cite{Hughes}, we would have to change our assumptions on the sign of the boundary parameters. In the aforementioned manuscript, the sign of the boundary parameter does not affect the well--posedness for the direct scattering problem, which is not the case here. To avoid having contradictory assumptions in different sections, we provide a non--existence result for the transmission eigenvalue in the case of complex--valued coefficients. 

To begin, we define the Hilbert space 
$$X(D) =\brac{ (\phi,\psi ) \in \widetilde{H}^{1}(D) \times \widetilde{H}^{1}(D)  \enspace \text{such that} \enspace \phi=\psi \enspace \text{on $\partial D$}}$$
with the associated graph norm/inner--product. Therefore, we have that the eigenfunctions $(w,v) \in X(D)$ satisfy 
$$
\int_{D} \nabla w \cdot \nabla \overline{\phi} - k^2 n w \overline{\phi} \,  \text{d} x = \int_{\partial D}   \overline{\phi}  \partial_{\nu} w  \, \text{d}s
$$
and 
\begin{align*}
\int_{D} \nabla v \cdot \nabla \overline{\psi} - k^2  v \overline{\psi} \, \text{d}x &= \int_{\partial D}   \overline{\psi}  \partial_{\nu} v  \, \text{d}s \\
	&=  \int_{\partial D}   \overline{\psi}  \partial_{\nu} w +   \overline{\psi} \mathscr{B}(w) \, \text{d}s 
\end{align*}
for any $(\phi,\psi ) \in X(D)$. 
Since the $\phi=\psi$ on $\partial D$ we have that
\begin{align}\label{int-id-4noeig}
 \int_{D} \nabla v \cdot \nabla \overline{\psi} - k^2  v \overline{\psi} \, \text{d}x = \int_{D} \nabla w \cdot \nabla \overline{\phi} - k^2 n w \overline{\phi} \, \text{d}x+ \int_{\partial D} \mu \nabla_{\partial D}w \cdot \nabla_{\partial D} \overline{\phi} +\gamma w \overline{\phi}  \, \text{d}s.
\end{align}
With this integral identity, we can prove the following result. 
\begin{theorem}
    If the coefficients are complex--valued such that  
    $$\mathrm{Im}(n) > 0, \quad - \mathrm{Im}(\gamma) \geq 0 \quad \text{and} \quad - \overline{\xi} \cdot \mathrm{Im}(\mu )  \xi \geq 0$$
for all $\xi \in \mathbb{C}^{d-1}$ for almost every $x \in \partial D$.
    Then there are no real--valued transmission eigenvalues. 
\end{theorem}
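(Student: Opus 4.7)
The plan is to exploit the integral identity \eqref{int-id-4noeig} by testing against the eigenfunctions themselves and extracting the imaginary part, which, under the assumed sign conditions, will force $w=0$ in $D$. The pair $(\phi,\psi) = (w,v) \in X(D)$ is admissible since by construction $w=v$ on $\partial D$. Substituting gives
\[
\int_{D} |\nabla v|^2 - k^2 |v|^2 \, \mathrm{d}x = \int_{D} |\nabla w|^2 - k^2 n |w|^2 \, \mathrm{d}x + \int_{\partial D} \mu \nabla_{\partial D} w \cdot \nabla_{\partial D} \overline{w} + \gamma |w|^2 \, \mathrm{d}s.
\]
Assume $k \in \mathbb{R}\setminus\{0\}$ is a transmission eigenvalue; then $k^2$ is real and the left-hand side is real-valued. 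Taking the imaginary part of the right-hand side and using the convention that $\mathrm{Im}(\overline{\xi} \cdot \mu \xi) = \overline{\xi} \cdot \mathrm{Im}(\mu)\xi$ for the Hermitian part of $\mu$, we obtain the identity
\[
0 = -k^2 \int_{D} \mathrm{Im}(n) |w|^2 \, \mathrm{d}x + \int_{\partial D} \overline{\nabla_{\partial D} w} \cdot \mathrm{Im}(\mu) \nabla_{\partial D} w + \mathrm{Im}(\gamma) |w|^2 \, \mathrm{d}s.
\]

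The key observation is now a sign argument: rearranging,
\[
k^2 \int_{D} \mathrm{Im}(n) |w|^2 \, \mathrm{d}x = \int_{\partial D} \overline{\nabla_{\partial D} w} \cdot \mathrm{Im}(\mu) \nabla_{\partial D} w + \mathrm{Im}(\gamma) |w|^2 \, \mathrm{d}s,
\]
the left-hand side is non-negative by the hypothesis $\mathrm{Im}(n)>0$, while the right-hand side is non-positive by $-\overline{\xi}\cdot\mathrm{Im}(\mu)\xi \geq 0$ and $-\mathrm{Im}(\gamma) \geq 0$. Consequently both sides must vanish. Since $k^2 > 0$ and $\mathrm{Im}(n) > 0$ strictly, this forces $w = 0$ almost everywhere in $D$.

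From $w = 0$ in $D$ and the transmission conditions \eqref{v-w-on-bD}, we then infer $v = 0$ and $\partial_\nu v = -\mathscr{B}(w) + \partial_\nu w = 0$ on $\partial D$. Together with $\Delta v + k^2 v = 0$ in $D$ from \eqref{v-w-in-D}, Holmgren's theorem yields $v \equiv 0$ in $D$, contradicting the non-triviality of the eigenpair. Hence no such real $k \neq 0$ exists. The harder conceptual point is the sign-balancing step itself: it only closes because $\mathrm{Im}(n)$ is assumed \emph{strictly} positive (not merely non-negative), which turns a non-negativity estimate into pointwise vanishing of $w$; without this strictness one would be left with equality of two non-negative/non-positive quantities and no way to separate them. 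The case $k = 0$ is excluded as a non-physical wavenumber.
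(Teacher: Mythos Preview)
Your proof is correct and follows essentially the same route as the paper: test the integral identity \eqref{int-id-4noeig} with $(\phi,\psi)=(w,v)$, take the imaginary part (which kills the left-hand side for real $k$), and use the opposing sign conditions on $\mathrm{Im}(n)$ versus $\mathrm{Im}(\mu),\mathrm{Im}(\gamma)$ to force $w=0$, then conclude $v=0$ via the transmission conditions and Holmgren. The only cosmetic differences are that you spell out the Holmgren step explicitly (the paper just says ``just as before'') and you comment on the role of strict positivity of $\mathrm{Im}(n)$ and the exclusion of $k=0$.
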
 
\begin{proof}
In order to prove the claim, we consider \eqref{int-id-4noeig} where we take  $(\phi,\psi )=(w,v)$. This gives the equality 
$$ 
\int_{D} |\nabla v|^2  - k^2  |v|^2 \, \text{d}x = \int_{D} |\nabla w|^2- k^2 n |w|^2 \, \text{d}x + \int_{\partial D} \mu |\nabla_{\partial D}w |^2 +\gamma |w|^2 \, \text{d}s.
$$
Now, we take the imaginary part of the above expression to obtain 
$$ 
-k^2 \int_{D} \text{Im}(n) |w|^2 \, \text{d}x =  - \int_{\partial D} \text{Im}( \mu ) |\nabla_{\partial D}w |^2 + \text{Im}(\gamma) |w|^2 \, \text{d}s.
$$
By the assumptions on the coefficients, we have that $w=0$ in $D$ since the lefthand side is non--positive and the righthand side is non--negative.  Just as before, this implies that $v=0$ in $D$, which implies that for any real--valued wave, $k$ is not a transmission eigenvalue, proving the claim. 
\end{proof}

\subsection{Some Numerical Results for TE}\label{numerical-TEV}
Here, we will provide some numerical estimates for the above TE problem. To this end, we assume the coefficients are real-valued where we let $n>1$ be a given constant as well as $\mu$ and $\gamma$. The transmission problem 
\begin{eqnarray*}
\Delta v+k^2v=0\,,\quad \Delta w+k^2 nw=0
\end{eqnarray*}
in $D$ with boundary conditions $v-w=0$ and $\partial_\nu v-\partial_\nu w-\mathscr{B}(w)=0$ can be solved with boundary integral equation using the same ansatz as in the Appendix given in Section \ref{bie}. Precisely, we use the single-layer ansatz
\begin{eqnarray*}
 v(x)&=&\mathrm{SL}_{k}\varphi(x)\,,\quad x\in D\,,\\
 w(x)&=&\mathrm{SL}_{k\sqrt{n}}\psi(x)\,,\quad x\in D\,.
\end{eqnarray*}
Using the first boundary condition yields
\begin{eqnarray*}
\mathrm{S}_{k}\varphi(x)-\mathrm{S}_{k\sqrt{n}}\psi(x)=0\,,\quad x\in \partial D\,.
\end{eqnarray*}
Using the second boundary condition gives
\begin{eqnarray*}
\left(\frac{1}{2}I+\mathrm{D}^\top_{k}\right)\varphi(x)-\left(\frac{1}{2}I+\mathrm{D}^\top_{k\sqrt{n}}+\gamma \mathrm{S}_{k\sqrt{n}}-\mu \mathrm{T}_{k\sqrt{n}}\right)\psi(x)=0\,,\quad x\in \partial D\,.
\end{eqnarray*}
The last two equations can be written as a non-linear eigenvalue problem for the eigenvalue $k$ and the eigenfunction $z=(\varphi,\psi)^\top$ as
\begin{eqnarray*}
M(k)z=0\,,
\end{eqnarray*}
with the obvious definition of $M(k)$. After discretization of this problem via boundary element collocation method, a non-linear eigenvalue solver such as the contour integral method by \cite{beyn} is used to compute the needed eigenvalues $k$ within a chosen contour lying in the complex-plane. If they exist, this gives us the opportunity to compute pure complex-valued transmission eigenvalues.  

\subsubsection{Transmission Eigenvalues for a disk}
The transmission eigenvalues of a disk with radius $R>0$ centered at the origin can be numerically computed by finding the roots of a certain determinant. Precisely, using a similar ansatz as in Section \ref{sov} we have that 
$$v(r, \theta ) = \sum_{|p|=0}^{\infty} \text{v}_{p} J_{p}(kr) \text{e}^{\text{i}p\theta} \quad \text{and} \quad w(r , \theta) = \sum_{|p|=0}^{\infty}  \text{w}_{p} J_{p} (k \sqrt{n} r) \text{e}^{\text{i}p\theta}.$$
The series solutions already satisfy the equations in the scatterer $D$. By applying the boundary conditions we obtain that $k$ is a TE provided that it is a zero of the determinant 
\begin{eqnarray}
    \mathrm{det}\begin{pmatrix}
    J_p(kR) & -J_p\left(k\sqrt{n}R\right)\\
    k J_p'(kR) & -\left(k\sqrt{n}J_p'\left(k\sqrt{n}R\right)+\left(\gamma  +\frac{\mu p^2}{R^2}\right) J_p\left(k\sqrt{n}R\right)\right)
    \end{pmatrix}
    \label{2D}
\end{eqnarray}
for arbitrary but fixed $p\in \Z$, where $R>0$, $n>1$, $\mu$, and $\gamma$ are given parameters.

\subsubsection{Numerical \textcolor{black}{results} via BIEs}
First, we compute the transmission eigenvalues for a disk with radius $R=2$ using the parameters $n=4$, $\mu=2$, and $\gamma=1$ using both the integral equation method and the series expansion method to show that both method deliver the same results. Note that we only compute the eigenvalues within an interesting set such as $\{z\in\mathbb{C}\,:\,|z-1.25|\leq 0.35\}\subset \mathbb{C}$.
\begin{table}[!ht]
\centering
 \begin{tabular}{r|r|r|}
 $p$ & TE (series) & BEM\\
 \hline
2 & $1.081995004204943$ & $1.082779369684411 - 0.000378339595974\mathrm{i}$\\
2 & $1.081995004204943$  & $1.082779369718667 - 0.000378337460393\mathrm{i}$ \\
3 & $1.444057126606098 $ &$1.445031960126334 - 0.000027829895197\mathrm{i}$\\
3 & $1.444057126606098$ &$1.445031958798628 - 0.000027829450002\mathrm{i}
$\\
1 & $1.567008428331221$ &$1.567915072883429 - 0.000599790601235\mathrm{i}$\\
1 & $1.567008428331221$ &$1.567913270599214 - 0.000601136192057\mathrm{i}$\\
0 & $1.223227533499797 - 0.236035304541013\mathrm{i}$ &$1.224593675536148 - 0.237072460045154\mathrm{i}$\\
0 & $1.223227533499797 + 0.236035304541013\mathrm{i}$ &$1.223647971805621 + 0.235866668217871\mathrm{i}$\\
 \hline
 \end{tabular}
 \caption{\label{TE1}
 Transmission eigenvalue for a disk with radius $R=2$ and parameters $n=4$, $\mu=2$, and $\gamma=1$ using boundary element collocation method and series expansion.}
\end{table}
As we can see in Table \ref{TE1}, we obtain six real transmission eigenvalues (counting multiplicity) and one pair of pure complex-valued eigenvalues hence showing that also complex-valued transmission eigenvalues exist. The transmission eigenvalues agree to three digits accuracy using $60$ collocation nodes within the boundary element collocation method and the parameters $N=24$ and $\ell=20$ within the Beyn method \cite{beyn}. Of course, we are able to compute many more pure complex transmission eigenvalues such as
\[2.086\pm 0.228\mathrm{i}\,,\quad 2.669\pm 0.251\mathrm{i}\,,\quad 2.749\pm 0.277\mathrm{i}\,,\quad 3.210\pm 0.127\mathrm{i}\]
which are located close to the real axis, an interesting fact on its own.

Note that the eigenfunctions that are radial symmetric (they do not depend on the angle $\theta$) are automatically satisfying the tangential derivative condition. Hence, for $p=0$ the eigenvalues for the new problem are the same as for $\mu=0$.

Next, we compute the transmission eigenvalues for an ellipse with semi-axis $a=1$ and $b=0.9$ as well as for $a=1$ and $b=0.8$ using the same parameters as before with the boundary integral equation method with 120 collocation points and in combination with the Beyn algorithm. We obtain the following results as shown in Table \ref{TE2}. 
\begin{table}[!ht]
\centering
 \begin{tabular}{r|r|r|}
 disk $R=1$ & ellipse $(1,0.9)$ & ellipse $(1,0.8)$\\ 
 \hline
 $0.683-0.001\mathrm{i}$ & $0.708+0.000\mathrm{i}$& $0.737+0.005\mathrm{i}$\\
 $1.434-0.001\mathrm{i}$ & $1.513+0.000\mathrm{i}$& $1.605+0.002\mathrm{i}$\\
 $1.434-0.001\mathrm{i}$ & $1.522+0.000\mathrm{i}$& $1.629+0.002\mathrm{i}$\\
 $2.290-0.000\mathrm{i}$ & $2.405-0.014\mathrm{i}$& $2.486-0.055\mathrm{i}$\\
 $2.290-0.000\mathrm{i}$ & $2.427-0.000\mathrm{i}$& $2.597-0.000\mathrm{i}$\\
 $3.016-0.000\mathrm{i}$ & $3.133-0.021\mathrm{i}$& $3.203-0.048\mathrm{i}$\\
 $3.016-0.000\mathrm{i}$ & $3.164-0.010\mathrm{i}$& $3.323-0.021\mathrm{i}$\\
 $3.135-0.001\mathrm{i}$ & $3.294+0.016\mathrm{i}$& $3.555+0.026\mathrm{i}$\\
 $3.135-0.001\mathrm{i}$ & $3.409+0.007\mathrm{i}$& $3.774+0.011\mathrm{i}$\\
 \hline
 \end{tabular}
 \caption{\label{TE2}
 Transmission eigenvalues for a disk with radius $R=1$, an ellipse with semi-axis $(1,0.9)$, and an ellipse with semi-axis $(1,0.8)$ and parameters $n=4$, $\mu=2$, and $\gamma=1$ using boundary element collocation method.}
\end{table}
We also list the results for a disk of radius $R=1$ to show how the transmission eigenvalues change when changing one of the half-axis of the ellipse. We remark that the real-part of the transmission eigenvalues increase when the semi-axis is decreased (the area of the obstacle decreases in this case). Also note that in Table \ref{TE2}, the TEs are most likely real--valued and the small imaginary part is due to numerical error.

\section{Conclusions}\label{conclusion}
Here, we studied the direct and inverse scattering problem for an isotropic material with a second-order boundary condition. We rigorously proved the well-posedness of the direct scattering problem. For the inverse scattering problem, we derived a direct sampling method with an imaging functional that is stable with respect to noise. In two dimensions, we provided several numerical examples where we recover circular and non-circular scatterers for various levels of noise.
We used separation of variables, Lippmann-Schwinger integral equations, and boundary integral equations to generate the data for these reconstructions. Our examples display a high level of resilience to noise. We also proved that the associated transmission eigenvalues form at most a discrete set. We then provide numerical examples in two dimensions to validate our theoretical result on discreteness. Future directions of this project include proving the existence of the transmission eigenvalues. Also, one may investigate the monotonicity and convergence of the transmission eigenvalues with respect to the boundary parameters. For future projects, it would be interesting to derive a more accurate scheme for the TE and forward scattering problem. One could also consider the case for limited aperture data. \\

%%%%%%%%%%%%%%%%%%%%%%%%%%%%%%%%%%%%%%%%%%%%%%%%

\noindent{\bf Acknowledgments:} The research of I. Harris is currently partially supported by the NSF DMS Grants 2208256 and 2509722. G. Granados began this project at Purdue University where he received partial support from the NSF DMS Grant 2107891. Currently, G. Granados acknowledges support from the NSF RTG DMS Grant 2135998.  

{\color{black}
%%%%%%%%%%%%%%%%%%%%%%%%%%%%%%%%%%%%%%%%%%%%%%%%%%%%%%%%%%%
\section{Appendix}
%%%%%%%%%%%%%%%%%%%%%%%%%%%%%%%%%%%%%%%%%%%%%%%%%%%%%%%%%%%
\subsection{Separation of Variables for the Direct Problem}\label{sov}
In this subsection, we will derive the solution to the direct scattering problem via separation of variables that was used in Section \ref{numerical-section}. We assume that $\partial D=R(\text{cos}(\theta) , \text{sin}(\theta))$ and $\partial \Omega=(\text{cos}(\theta) , \text{sin}(\theta))$ for some $0<R < 1$ where $\theta \in [0 , 2\pi )$. Using separation of variables in polar coordinates, we can derive an analytical formula for the Cauchy data on $\partial \Omega$. To this end, we consider the scattering problem \eqref{bvp}--\eqref{2nd-bc} with constant coefficients $n$, $\mu$, and $\gamma $. Therefore, we have that 
\begin{center}
$\Delta u^s + k^2 u^s = 0  \enspace \text{in} \enspace  \mathbb{R}^d \setminus \overline{B(0,R)} \quad \text{and} \quad \Delta u + k^2 n u = 0 \enspace \text{in} \enspace B(0,R)$ \vspace{.15in}

$(u^s + u^i)_{+} = u \quad \text{and} \quad \partial_{r}(u^s + u^i)_{+} = \partial_{r} u - \mu \frac{\text{d}^2}{\text{d}s^2} u + \gamma u \enspace \text{on} \enspace \partial B(0,R)$
\end{center}
with the Sommerfeld radiation condition as $r \rightarrow \infty$ for the scattered field $u^s$. We can express the incident field $u^i(x , \hat{y} ) = \text{e}^{\text{i}kx \cdot \hat{y}}$ using the Jacobi-Anger expansion
$$\text{e}^{\text{i}kx \cdot \hat{y}} = \sum_{|p|=0}^{\infty} \text{i}^p J_p (kr) \text{e}^{\text{i}p(\theta - \phi)}$$
where $x = r (\text{cos}(\theta) , \text{sin}(\theta))$, $\hat{y} = (\text{cos}(\phi) , \text{sin}(\phi))$, and $J_p$ is the Bessel function of the first kind of order $p$. We make the ansatz that the scattered field $u^s$ and total field $u$ can be expressed as
$$u^{s}(r, \theta ) = \sum_{|p|=0}^{\infty} \text{i}^p \text{u}^{s}_{p} H^{(1)}_{p}(kr) \text{e}^{\text{i}p(\theta - \phi)} \quad \text{and} \quad u(r , \theta) = \sum_{|p|=0}^{\infty} \text{i}^p \text{u}_{p} J_{p} (k \sqrt{n} r) \text{e}^{\text{i}p(\theta - \phi)},$$
where $H^{(1)}_{p}$ are the Hankel functions of the first kind of order $p$. We are able to determine the coefficients $\text{u}^{s}_{p}$ and $\text{u}_{p}$ by using the boundary conditions at $r = R$. Thus, the coefficients $ \text{u}^{s}_{p}$ and $\text{u}_{p}$ satisfy the $2 \times 2$ linear system
\begin{align} 
\begin{pmatrix}
H^{(1)}_{p}(kR) & - J_{p}(k \sqrt{n} R) \\
k H^{(1)\prime}_{p}(kR) & - k \sqrt{n} J^{\prime}_p (k \sqrt{n} R ) -  \big(\mu \frac{p^2}{R^2}  + \gamma\big) J_p (k \sqrt{n} R )
\end{pmatrix} 
\begin{pmatrix}
\text{u}^{s}_{p} \\
\text{u}_{p}
\end{pmatrix}
= \begin{pmatrix}
-J_p (kR) \\
-k J_{p}^{\prime}(kR)
\end{pmatrix} \label{sov-mat}
\end{align}
for all $p \in \mathbb{Z}$. We use Cramer's Rule to compute the coefficients $\text{u}^s_p$, where we obtain
\[ 
u^s_p = \dfrac{\text{det}A_x}{\text{det}A} \quad \text{where} \quad
A= \begin{pmatrix}
H^{(1)}_{p}(kR) & - J_{p}(k \sqrt{n} R) \\
k H^{(1)\prime}_{p}(kR) & - k \sqrt{n} J^{\prime}_p (k \sqrt{n} R ) -  \big(\mu \frac{p^2}{R^2}  + \gamma\big) J_p (k \sqrt{n} R )
\end{pmatrix} \]
and
\[A_x = \begin{pmatrix}
-J_p (kR) & - J_{p}(k \sqrt{n} R) \\
-k J_{p}^{\prime}(kR) & - k \sqrt{n} J^{\prime}_p (k \sqrt{n} R ) -  \big(\mu \frac{p^2}{R^2}  + \gamma\big) J_p (k \sqrt{n} R )
\end{pmatrix}.\]
With this, we have that the Cauchy data can be approximated by
$$u^s (r , \theta) \approx \sum_{|p|=0}^{15} \text{i}^p \text{u}^{s}_{p} H^{(1)}_{p}(kr) \text{e}^{\text{i}p(\theta - \phi)} \quad \text{and} \quad \partial_{r} \text{u}^s (r , \theta) \approx k \sum_{|p|=0}^{15} \text{i}^p u^{s}_{p} H^{(1)'}_{p}(kr) \text{e}^{\text{i}p(\theta - \phi)}.$$
This is used for the reconstructions in Figures \ref{fig0}, \ref{fig1}, and \ref{fig3}.

%%%%%%%%%%%%%%%%%%%%%%%%%%%%%%%%%%%%%%%%%%%%%%%%%%
\subsection{Boundary Integral Equations  for the Direct Problem}\label{bie}
Here, we will discuss the system of boundary integral equations to compute the Cauchy data used in Section \ref{numerical-section}. To this end, we assume that the scatterer $D$ is given by a simply-connected bounded domain with boundary $\partial D$ of class $C^{4,\alpha}$ with a unit normal $\nu$ pointing in the exterior. The parameter $n>1$ denotes the given constant index of refraction, $k>0$ the given wave number, and $\gamma$ and $\mu$ are given material parameter. Note, that the far--field pattern for the separation of variables solution is given by
\begin{eqnarray}
 u^\infty(\hat{x}, \hat{y})=\frac{4}{\mathrm{i}}\sum_{|p|= 0}^\infty \text{u}^s_p \mathrm{e}^{\mathrm{i}p(\theta -\phi)}\,,  \label{farfieldseries}
\end{eqnarray}
for $\hat{x} = (\cos(\theta), \sin(\theta) )$ and $\hat{y} = (\cos(\phi), \sin(\phi) )$, where the coefficients $\text{u}^s_p$ are given by solving \eqref{sov-mat}. 

We now derive a system of boundary integral equations to solve for the scattered field and the far--field. Therefore, in order to solve that forward problem, we make the single-layer ansatz
\begin{eqnarray}
 u^s(x)&=&\mathrm{SL}_{k}\varphi(x)\,,\quad x\in\mathbb{R}^2 \backslash \overline{D}\,,\label{start1}\\
 u(x)&=&\mathrm{SL}_{k\sqrt{n}}\psi(x)\,,\quad x\in D\,,
 \label{start2}
\end{eqnarray}
where the single layer operator is defined by
\begin{eqnarray*}
\mathrm{SL}_{\tau}\phi(x)&=&\int_{\partial D}\Phi_\tau(x,y)\phi(y)\,\mathrm{d}s(y)\,,\quad  x\notin \partial D\,,
\end{eqnarray*}
with $\Phi_\tau (x,y)$ the fundamental solution of the Helmholtz equation in two dimensions for the fixed wave number $\tau$. Now, we let $x$ approach the boundary in (\ref{start1}) and (\ref{start2}). With the jump conditions (see \cite[Theorem 3.1]{colton-kress}), we obtain
\begin{eqnarray}
 u^s(x)&=&\mathrm{S}_{k}\varphi(x)\,,\quad x\in\partial D\,,\label{start1a}\\
 u(x)&=&\mathrm{S}_{k\sqrt{n}}\psi(x)\,,\quad x\in \partial D\,,
 \label{start2a}
\end{eqnarray}
where the single-layer boundary integral operator is defined by
\begin{eqnarray*}
    \mathrm{S}_{\tau}\phi(x)&=&\int_{\partial D}\Phi_\tau (x,y)\phi(y)\,\mathrm{d}s(y)\,,\quad x\in \partial D\,.
\end{eqnarray*}
Taking the normal derivative and letting $x$ approach the boundary along with the jump conditions (\cite[Theorem 3.1]{colton-kress}), we obtain
\begin{eqnarray}
 \partial_{\nu(x)} u^s(x)&=&\left(-\frac{1}{2}I+\mathrm{D}^\top_{k}\right)\varphi(x)\,,\quad x\in\partial D\,,\label{start1b}\\
 \partial_{\nu(x)} u(x)&=&\left(\frac{1}{2}I+\mathrm{D}^\top_{k\sqrt{n}}\right)\psi(x)\,,\quad x\in \partial D\,,
 \label{start2b}
\end{eqnarray}
where the normal derivative of the single-layer boundary integral operator is defined by
\begin{eqnarray*}
    \mathrm{D}^\top_{k}\phi(x)&=&\int_{\partial D}\partial_{\nu(x)}\Phi_k(x,y)\phi(y)\,\mathrm{d}s(y)\,,\qquad x\in \partial D\,.
\end{eqnarray*}
We finally define the operator  
\begin{eqnarray}
    \mathrm{T}_{k}\phi(x)&=&\frac{\text{d}^2}{\text{d}s^2}\int_{\partial D}\Phi_k(x,y)\phi(y)\,\mathrm{d}s(y)\,,\qquad x\in \partial D\,,
    \label{troubleop}
\end{eqnarray}
where ${\text{d}}/{\text{d}s}$ denotes the tangential derivative.
Using the boundary conditions yield the two equations for the two unknown densities on the boundary (see (\ref{start1a}) and (\ref{start2a}) as well as (\ref{start1b}) and (\ref{start2b}))
\begin{align}
\mathrm{S}_{k}\varphi(x)-\mathrm{S}_{k\sqrt{n}}\psi(x)=&-u^i , \nonumber\\
\left(-\frac{1}{2}I+\mathrm{D}^\top_{k}\right)\varphi(x)-\left(\frac{1}{2}I+\mathrm{D}^\top_{k\sqrt{n}}+\gamma \mathrm{S}_{k\sqrt{n}}-\mu \mathrm{T}_{k\sqrt{n}}\right)\psi(x)=&-\partial_\nu u^i .
\label{system}
\end{align}
After we solve the $2\times 2$ system for $\varphi$ and $\psi$ on the boundary, we can compute the scattered field using (\ref{start1}) at any point in $\mathbb{R}^2\backslash \overline{D}$ or the far-field of $u^s$ by computing
\begin{eqnarray}
u^\infty(\hat{x})=\mathrm{S}_k^\infty \varphi (\hat{x}), \quad \text{ where } \quad \mathrm{S}^\infty_k\phi(\hat{x})=\int_{\partial D} \mathrm{e}^{-\mathrm{i}k \hat{x}\cdotp y}\phi(y)\,\mathrm{d}s(y)\,,\quad \hat{x}\in \mathbb{S}^1\,.
\label{farfieldexpression}
\end{eqnarray}
Then, the Cauchy data is given by 
$$u^s(x)=\mathrm{SL}_{k}\varphi(x) \quad \text{ and  } \quad \partial_{\nu(x)} u^s(x)=\partial_{\nu(x)}\mathrm{SL}_{k}\varphi(x)\,,\quad x \in \partial \Omega$$ 
where $\varphi = \varphi( \cdot \, ; \hat{y})$ solves \eqref{system}. 

In order to check the accuracy for solving the system we let $N_f$ be the number of faces used in the boundary element collocation method. Since we use a $2\pi$ parametrization of the boundary in the form $z(t)=(z_1(t),z_2(t))$ with $t\in [0,2\pi]$, the operator $T$ given by
\begin{eqnarray*}
\mathrm{T}_{k}\phi(x)&=&\frac{\mathrm{d}^2}{\mathrm{d}s^2}\int_{\partial D}\Phi_k(x,y)\phi(y)\,\mathrm{d}s(y)\\
&=&\frac{\mathrm{d}^2}{\mathrm{d}s^2}S_k\phi(x)\,,\qquad x\in \partial D\,,
\end{eqnarray*}
can be rewritten as (see also \cite[p. 6]{cakonikress})
\begin{eqnarray*}
&&\frac{1}{\|z'(t)\|}\frac{\mathrm{d}}{\mathrm{d}t}\left(\frac{1}{\|z'(t)\|}\frac{\mathrm{d}}{\mathrm{d}t}S_k\phi(z(t))\right)\\
&=&\frac{1}{\|z'(t)\|}\left[\left(\frac{\mathrm{d}}{\mathrm{d}t}\frac{1}{\|z'(t)\|}\right)\cdotp \left(\frac{\mathrm{d}}{\mathrm{d}t}S_k\phi(z(t))\right)+\frac{1}{\|z'(t)\|}\frac{\mathrm{d}^2}{\mathrm{d}t^2}S_k\phi(z(t))\right]\,,
\end{eqnarray*}
where $x=z(t)\in \partial D$.
Using centered finite differences for the first- and second-order derivative, which are of second-order accuracy yield the approximations
\begin{eqnarray*}
\frac{\mathrm{d}}{\mathrm{d}t}S_k\phi(z(t))&\approx &\frac{S_k\phi(z(t+h))-S_k\phi(z(t-h))}{2h}\\
\frac{\mathrm{d}^2}{\mathrm{d}t^2}S_k\phi(z(t))&\approx &\frac{S_k\phi(z(t+h))-2S_k\phi(z(t))+S_k\phi(z(t-h))}{h^2}\\
\end{eqnarray*}
where $h>0$ small is given by the user. The operator $S_k$ is then approximated in the usual way. We will check the error of the far-field pattern by comparing the separation of variable solution of \eqref{system}. To this end, we define the far-field matrices 
$${\bf F}_k \in \mathbb{C}^{64 \times 64} \quad \text{ and} \quad {\bf F}_k^{(N_f)}\in \mathbb{C}^{64 \times 64}$$ 
containing values of the the far-field data for $64$ equidistant incident and observation directions obtained with the series expansion (\ref{farfieldseries}) and with the boundary element collocation method using $N_f$ faces using (\ref{farfieldexpression}), respectively. The absolute error is defined as
$$\varepsilon_k^{(N_f)} :=\max\limits_{1\leq i,j \leq 64}  \left|{\bf F}_k (i,j) -{\bf F}_k^{(N_f)}(i,j) \right| ,$$
where $k$ is a given wave number. Note that the absolute error also depends on the physical parameter $n$, $\mu$, and $\gamma$. 

In Table \ref{tablefarfield}, we provide the computed absolute error in the far-field data, where the scatterer $D=B(0,2)$ and the physical parameters $\gamma=1$, $\mu=1$, and $n=4$. We check the error for multiple wave numbers given by $k=2$, $k=4$, and $k=6$. From our calculations we observe, reasonable accurate results using $240=3N_f$ collocation nodes. We discretize the tangential derivative that is only accurate up to two significant digits, which limits the accuracy of the collocation method with $h=0.01$ for smaller wave numbers. 
\begin{table}[H]
\centering
 \begin{tabular}{r|r|r|r|}
  $N_f$ & $\varepsilon_2^{(N_f)}$ & $\varepsilon_4^{(N_f)}$ & $\varepsilon_6^{(N_f)}$ \\
  \hline 
10 (\phantom{1}30)& 0.17048 & 2.76147 & 30.40369\\
20 (\phantom{1}60) & 0.02634 & 0.16304 & 0.77639\\
40 (120) & 0.00495 & 0.02392 & 0.14933\\
80 (240) & 0.00137 & 0.00375 & 0.02283\\
160 (480) & 0.00272 & 0.00281 & 0.00586\\
\hline
 \end{tabular}
 \caption{\label{tablefarfield}Absolute error of the far-field with 64 equidistant incident and observation directions for the disk with radius $R=2$ and the physical parameters $\gamma=1$, $\mu=1$, and $n=4$ for varying number of faces (collocation nodes). The wave numbers are $k=2$, $k=4$, and $k=6$, respectively.}
\end{table}
This approximation of the Cauchy data is used in Figures \ref{fig6}--\ref{fig8}.\\}

%%%%%%%%%%%%%%%%%%%%%%%%%%%%%%%%%%%%%%%%%%%%%%%%%%%%%%%%%%%

\end{document}